\renewcommand{\ge}{\geqslant}
\newcommand{\F}{{\cal{F}}} 
\newcommand{\A}{{\cal{A}}}
\newtheorem{theorem}{Theorem}
\newtheorem{corollary}{Corollary}
\newtheorem{conjecture}{Conjecture}
\newtheorem{lemma}{Lemma}
\newtheorem{problem}{Problem}
\newtheorem{prop}{Proposition}
\newtheorem{claim}{Claim}
\author{
	Eyal Ackerman\thanks{Department of Mathematics, Physics and Computer Science,
		University of Haifa at Oranim, 	Tivon 36006, Israel. \texttt{ackerman@math.haifa.ac.il}}\and
	G\'abor Dam\'asdi\thanks{Alfr\'ed R\'enyi Institute of Mathematics and ELTE E\H otv\H os Lor\'and University, Budapest, Hungary. Partially supported by ERC Advanced Grant GeoScape.
\texttt{damasdigabor@caesar.elte.hu}}\and
	Bal\'azs Keszegh\thanks{HUN-REN Alfréd Rényi Institute of Mathematics and ELTE Eötvös Loránd University, Budapest, Hungary. 
Research supported by the J\'anos Bolyai Research Scholarship of the Hungarian Academy of Sciences, by the National Research, Development and Innovation Office -- NKFIH under the grant K 132696 and FK 132060, by the \'UNKP-23-5 New National Excellence Program of the Ministry for Innovation and Technology from the source of the National Research, Development and Innovation Fund and by the ERC Advanced Grant ``ERMiD''. This research has been implemented with the support provided by the Ministry of Innovation and Technology of Hungary from the National Research, Development and Innovation Fund, financed under the  ELTE TKP 2021-NKTA-62 funding scheme.}\and
	Rom Pinchasi\thanks{Technion - Israel Institute of Technology, Haifa, Israel. \texttt{room@technion.ac.il}. Visiting professor at EPFL, Lausanne, Switzerland. Supported by ISF grant (grant No.\ 1091/21)}\and
	Rebeka Raffay\thanks{École Polytechnique Fédérale de Lausanne (EPFL), Lausanne, Switzerland. \texttt{ rebekaraffay@gmail.com}}
}
\title{The maximum number of digons formed by pairwise crossing pseudocircles}
\begin{document}
	\maketitle
	
	\begin{abstract}
    In 1972, Branko Grünbaum conjectured that any arrangement of $n>2$ pairwise crossing pseudocircles in the plane can have at most
    $2n-2$ digons (regions enclosed by exactly two pseudoarcs), with the bound being tight. 
    While this conjecture has been confirmed for 
    cylindrical arrangements of pseudocircles and more recently for geometric circles, we extend these results to any simple arrangement of pairwise intersecting pseudocircles. Using techniques from the above-mentioned special cases, we provide complete proof of Grünbaum's conjecture that has stood open for over five decades.
	\end{abstract}

	\section{Introduction}
In this paper we focus on families of pairwise crossing pseudocircles.  A \emph{family of pseudocircles} is a set $\F$ of simple closed Jordan curves in the plane such that every two of them are either disjoint, intersect at exactly one point in which they touch or intersect at exactly two
points in which they properly cross each other. 
The \emph{arrangement} $\A(\F)$ is the cell complex into which the plane is decomposed by the pseudocircles in $\F$. It consists of vertices, edges and faces. The arrangement is called \emph{trivial} if there are two points that lie on every pseudocircle in $\F$. If there is no point that lies on three pseudocircles, then both the
arrangement and $\F$ are called \emph{simple}.
Note that every simple arrangement of more than two pseudocircles is nontrivial. The arrangement is called \emph{pairwise intersecting} if either any two pseudocircles intersect in exactly two points or in exactly one point (where they touch). The arrangement is called \emph{pairwise crossing} if any two pseudocircles intersect in exactly two points.

A \emph{digon} is a face in the arrangement that is bounded by exactly two edges. If the two edges of a digon are contained in pseudocircles $c_1$ and $c_2$, respectively, then we say that $c_1$ and $c_2$ \emph{form} that digon and that each of $c_1$ and $c_2$ \emph{supports} it.

A trivial arrangement of $n$ pseudocircles contains $2n$ digons for $n>1$. In his 1972 monograph ``Arrangements and Spreads''~\cite{G72} Branko Gr\"unbaum conjectured that for nontrivial arrangements of pairwise crossing pseudocircles the maximum number of digons is $2n-2$. 
 
 \begin{conjecture}[Gr\"unbaum’s digon conjecture~{\cite[Conjecture 3.6]{G72}}]\label{conj:grunb}
	Every simple arrangement of $n > 2$ pairwise crossing pseudocircles has at most $2n-2$ digons.
\end{conjecture}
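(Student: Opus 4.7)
The plan is to proceed by strong induction on $n$. The base case $n=3$ can be verified directly: a simple arrangement of three pairwise crossing pseudocircles has at most $4 = 2\cdot 3 - 2$ digons. For the inductive step, I would try to identify a pseudocircle $c$ in the arrangement whose removal destroys at most two digons. If such a $c$ exists, removing it yields a simple arrangement of $n-1$ pairwise crossing pseudocircles with, by induction, at most $2(n-1)-2 = 2n-4$ digons; adding back $c$'s contribution of at most two digons gives the desired bound $2n-2$.

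The heart of the argument is therefore to show that such a pseudocircle $c$ always exists. A naive averaging gives only that some $c$ participates in at most $\lfloor 2D/n \rfloor$ digons; assuming for contradiction that $D \geq 2n-1$ yields an average participation just under $4$, so one can only guarantee some $c$ with participation at most $3$, which is one short of what the induction requires. To close the gap, I would use the two already-known cases of Gr\"unbaum's conjecture (cylindrical arrangements and geometric circles) as black boxes, through a local reduction. Concretely, I would choose $c$ to lie on the \emph{outer boundary} of the arrangement in some well-defined sense (e.g.\ a pseudocircle bounding an unbounded face, or one maximizing a suitable sweep order), and then argue that, up to a continuous deformation that preserves all digon incidences involving $c$, the sub-arrangement consisting of $c$ together with those pseudocircles that share a digon with $c$ can be realized either by geometric circles or in cylindrical form. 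The known bound for the special case then forces $d(c) \leq 2$, completing the induction.

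The main obstacle is making this local realizability or deformation argument rigorous. Pseudocircles enjoy much more topological flexibility than geometric circles, so it is not a priori clear that an arbitrary local configuration can be straightened into a circular or cylindrical one without altering digon incidences or violating the pairwise-crossing condition for pseudocircles outside the selected sub-arrangement. Overcoming this is precisely where the techniques from the two known special cases must be \textbf{combined}: the sweeping tools developed for cylindrical pseudocircle arrangements give a combinatorial handle on the local topology around $c$, while the more rigid extremal arguments used for geometric circles bound $d(c)$ once the local realization has been fixed. Additional care is needed in degenerate configurations, for instance when several digons share a common vertex on $c$, when two digons along $c$ are ``nested'' inside one another, or when the candidate boundary pseudocircle is not unique; these cases must be handled by a separate local analysis, probably via a case split on the cyclic pattern in which digons appear along $c$.
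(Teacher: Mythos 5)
Your plan has a genuine gap, and in fact two of its key steps cannot work as stated. First, even granting that the sub-arrangement consisting of $c$ and its $k$ digon-partners could be realized by geometric circles or cylindrically, the known bound for those special cases gives at most $2(k+1)-2=2k$ digons among those $k+1$ pseudocircles; since $c$ supports only $k$ of them, this is vacuously satisfied for every $k$ and does \emph{not} force $d(c)\le 2$. So the black-box reduction, even if the realization step were rigorous, yields no contradiction and the induction does not close. Second, the realization step itself is not merely technically delicate but false in general: there are small arrangements of pairwise crossing pseudocircles that are not realizable by circles, and nothing about lying on the outer boundary makes the local configuration cylindrical (cylindrical means all pseudocircles surround a common point, a global property of the sub-arrangement). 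Third, the existence of a pseudocircle supporting at most two digons --- the statement your induction actually needs --- is left unproved; averaging gives only degree at most $3$, as you note, and a digon graph that is $3$-regular with $3n/2\le 2n-2$ edges is not excluded by anything in your argument.

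The paper takes an entirely different, non-inductive route: it builds the bipartite double cover of the digon graph (two vertices per pseudocircle, placed at its two intersections with an auxiliary pseudocircle $c$ that avoids all digons, and two edges per digon), draws the edges along $c$ according to a parity rule deciding whether each edge passes an intermediate vertex inside or outside $c$, and proves that every two independent edges cross an even number of times. The Strong Hanani--Tutte theorem then gives planarity, hence at most $2(2n)-4=4n-4$ edges and at most $2n-2$ digons. If you want to salvage an inductive approach you would need a direct structural proof that some pseudocircle supports at most two digons, which appears to be essentially as hard as the theorem itself.
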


This conjecture was accompanied by a construction with exactly $2n-2$ digons, see Figure~\ref{fig:2n-2} for an example with six pseudocircles. This example can be easily extended by adding new pseudocircles to the four in the middle, which illustrates that the bound in  Conjecture \ref{conj:grunb} is tight. With some care, this construction can even be realized with geometric circles.
A different construction appears in~\cite{FRS23}.

\begin{figure}[!h]
	\centering
	\includegraphics[width=0.4\linewidth]{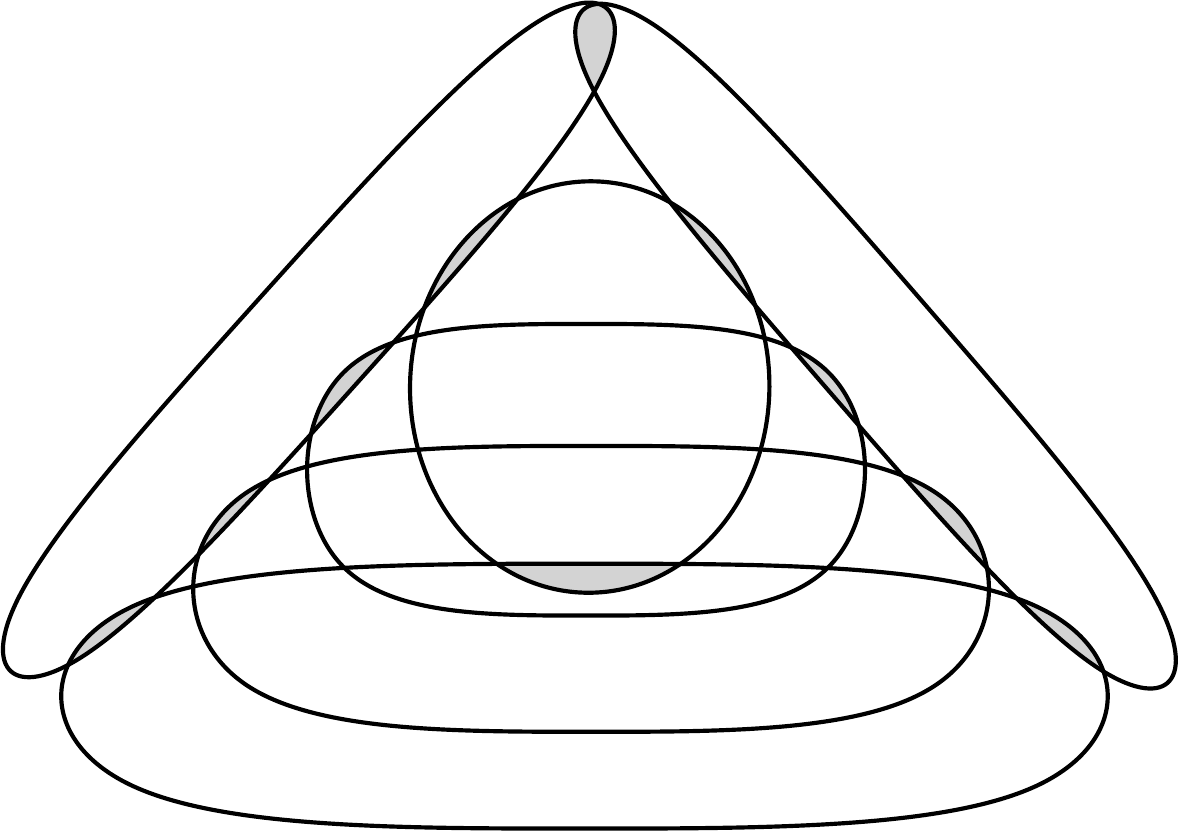}
	\caption{A family of 6 pseudocircles forming 10 digons.}
	\label{fig:2n-2}
\end{figure}

It is enough to consider \emph{simple} arrangements because it can be shown that small perturbations of the pseudocircles around the intersection points can be performed in a way that only increases or preserves the number of digons in nontrivial pairwise crossing arrangements. 


Conjecture \ref{conj:grunb} 
was shown to be true in several special cases. 
Agarwal et al.~\cite{ANPPSS04} showed that Conjecture \ref{conj:grunb} is true for \emph{cylindrical} arrangements, these are arrangements for which there is a point that is surrounded by each of the pseudocircles. 
More recently, Conjecture \ref{conj:grunb} was studied and advertised by Felsner, Roch and Scheucher~\cite{FRS23} who showed that it holds for arrangements in which there are three pseudocircles such that every two of them form a digon.
This new interest in Gr\"unbaum's conjecture has also
motivated us to study the problem. First, we were able to show in~\cite{circlepaper} that Gr\"unbaum's conjecture is true for simple arrangements of pairwise intersecting geometric circles. Here, using ideas from~\cite{circlepaper} and~\cite{ANPPSS04} we prove Gr\"unbaum's original conjecture.

\begin{theorem}\label{thm:main}
	Every simple arrangement of $n > 2$ pairwise crossing pseudocircles has at most $2n-2$ digons. This bound is tight.
\end{theorem}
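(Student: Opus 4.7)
The tightness is established by the construction in Figure~\ref{fig:2n-2}; only the upper bound requires work. The plan is strong induction on $n$, with base case $n=3$ handled by direct case analysis, since simple arrangements of three pairwise crossing pseudocircles admit only finitely many combinatorial types and each has at most $4$ digons. For the inductive step, let $D(\F)$ be the number of digons of $\A(\F)$, let $d_c$ denote the number of digons of $\A(\F)$ supported by $c\in\F$, and let $e_c$ be the number of \emph{new} digons appearing in $\A(\F\setminus\{c\})$ that were not digons in $\A(\F)$. Such new digons correspond to lenses between pairs $c_1,c_2\in\F\setminus\{c\}$ that $c$ alone was piercing. From the identity
\[
D(\F\setminus\{c\}) = D(\F) - d_c + e_c
\]
and the induction hypothesis $D(\F\setminus\{c\})\le 2(n-1)-2$, it suffices to produce some $c\in\F$ with $d_c-e_c\le 2$; this yields $D(\F)\le 2n-2$.

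To locate such a $c$, I would first handle the case where $\F$ is \emph{cylindrical}, that is, where some point lies in the interior of every pseudocircle. In this setting the arguments of Agarwal et al.~\cite{ANPPSS04} already supply an extremal pseudocircle suitable for the reduction, so the inductive step goes through. The remaining non-cylindrical case is where new ideas are needed. Here my plan is to adapt the extremal-selection strategy developed in~\cite{circlepaper} for geometric circles: pick $c$ to be a pseudocircle occupying an extremal position along the boundary of the unbounded face, or more generally with respect to a topological sweep of $\F$. Because we can no longer exploit the rigidity of geometric circles, this extremality must be justified via purely topological and combinatorial features of the pseudocircle arrangement, a step that probably requires breaking $\F$ at a carefully chosen arc and treating the resulting pseudoline-like structure.

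The crux of the argument, and the main expected obstacle, is to verify $d_c-e_c\le 2$ for the chosen $c$ in the non-cylindrical case. The inequality is sharp in Gr\"unbaum's extremal construction, so essentially every digon on $c$ beyond the first two must be matched bijectively to a new digon uncovered by removing $c$. Establishing such a matching requires a detailed analysis of how the arcs of other pseudocircles alternate along $c$ on each side of it; this is the step where the techniques of~\cite{ANPPSS04} and~\cite{circlepaper} would need to be fused, plausibly through a structural lemma asserting that any excess of digons supported by $c$ is compensated by lenses of other pairs that $c$ alone was piercing. Proving that lemma in full generality, without a common interior point and without circular rigidity, is the technical heart of the proof.
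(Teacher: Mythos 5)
Your proposal is a genuinely different route from the paper, but as written it has a critical gap: the entire argument rests on the existence of a pseudocircle $c$ with $d_c-e_c\le 2$, and you do not prove this --- you explicitly defer it as ``the technical heart of the proof.'' That deferred step is not a routine verification. In an extremal arrangement with $2n-2$ digons the sum $\sum_c d_c$ equals $4n-4$, so the average of $d_c$ is just below $4$; nothing prevents every pseudocircle from supporting $3$ or more digons, in which case you would need to certify $e_c\ge 1$ for your chosen $c$, i.e.\ that deleting $c$ uncovers a lens it was piercing. No mechanism in your sketch forces this: a pseudocircle that is extremal on the unbounded face (or in a topological sweep) can support three lunes while piercing no lens of any other pair. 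Moreover, the induction hypothesis is applied to $\F\setminus\{c\}$, which need not be ``clean'' --- it may contain pseudocircles supporting no digons and, more importantly, the new digons counted by $e_c$ can themselves interact with further deletions, so the charging scheme you describe would have to be set up globally, not one circle at a time. Until the selection lemma is stated precisely and proved, the proposal is a plan rather than a proof.

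For contrast, the paper avoids induction entirely. It fixes an auxiliary closed curve $c$ crossing every pseudocircle twice while avoiding all digons (Lemma~\ref{lem:base}), represents each pseudocircle by the two points $\alpha^{\rm in},\alpha^{\rm out}$ where $c$ crosses it, and each digon by two edges of a bipartite double cover of the digon graph. Each edge is drawn along $c$ and routed inside or outside $c$ at intermediate vertices according to which of its two defining pseudocircles is met first when entering the interior of $c$. A case analysis (Propositions~\ref{prop:conditions}--\ref{prop:condition-B}) shows every two independent edges cross evenly, so the Strong Hanani--Tutte Theorem gives planarity and the bipartite bound $2(2n)-4=4n-4$ edges, i.e.\ $2n-2$ digons. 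This global topological argument is precisely what sidesteps the local charging difficulty your induction runs into; if you wish to pursue the deletion approach, you would need to formulate and prove the compensation lemma in full, and I would expect that to be at least as hard as the published argument.
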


Our proof is inspired by the proof technique introduced in \cite{ANPPSS04} for bounding the number of digons in cylindrical arrangements of $n$ pairwise intersecting pseudocircles.
In that paper, a bipartite graph is associated to a cylindrical arrangement in a way that each pseudocircle is represented by a single point which is its intersection with a common fixed transversal line.
The edges of this graph correspond to digons and are drawn according to a simple yet clever rule such that every two independent edges cross an even number of times. The Strong Hanani-Tutte Theorem (see below) then implies that the drawn (bipartite) graph is planar and hence by Euler's formula has at most $2n-4$ edges. 

For the general case of Gr\"unbaum's conjecture, we combine a modification of this graph drawing technique and a graph \emph{doubling} technique that was used for the case of geometric circles in~\cite{circlepaper}.
Namely, a bipartite graph is drawn such that every pseudocircle is represented by \emph{two} points which are its intersection with a common transversal pseudocircle.
Furthermore, every digon is represented by \emph{two} edges that are drawn following a rule in the spirit of~\cite{ANPPSS04} such that every two independent edges cross an even number of times.
Therefore, the drawn (bipartite) graph is planar and hence has at most $2(2n)-4=4n-4$ edges, representing at most $2n-2$ digons.

Note that our result can be applied for bounding the number of \emph{touching points} in simple arrangements where any two pseudocircle intersect or touch. In any such  arrangement of $n > 2$ pseudocircles, one can turn each touching point into exactly one digon between the two pseudocircles that support it. Therefore, for simple arrangements, an upper bound on the number of digons translates directly to a bound on the number of touching pairs or equivalently (in this case) touching points. One can also turn each digon into exactly one touching point in the case of simple arrangements if $n > 3$, but for $n = 3$, two pseudocircles may touch the third one at separate points forming a single digon (the region bounded by the entire third pseudocircle), see Figure \ref{fig:n_3}. In this scenario, turning the digon into a touching point would reduce the third pseudocircle to a single point, which is degenerate, and the final arrangement would violate the assumptions on the number of crossings. 

\begin{figure}[!h]
    \centering
    \includegraphics[width=0.5\linewidth]{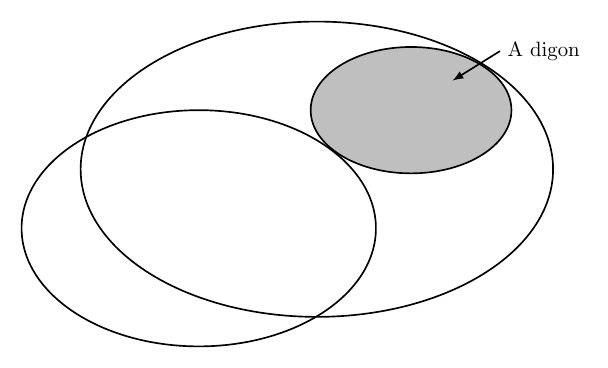}
    \caption{If we allow touchings and $n=3$, then a digon might be  surrounded by only one pseudocircle.}
    \label{fig:n_3}
\end{figure}

The transformation from digons to touching points or vice versa keeps their sum constant at every step (except for $n = 2$ or in the above-mentioned case), therefore they are equivalent for $n > 3$. From Theorem \ref{thm:main}, we have:
\begin{corollary}
Every simple pairwise intersecting arrangement of pseudocircles has at most $2n - 2$ touching pairs of pseudocircles. For $n > 3$, the sum of digons and touching pairs is at most $2n - 2$.
\end{corollary}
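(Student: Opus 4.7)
The plan is to deduce both parts of the corollary from Theorem~\ref{thm:main} by means of a local perturbation that converts each touching point into a digon. Given a simple pairwise intersecting arrangement $\F$ of $n$ pseudocircles with $t$ touching pairs and $d$ digons, I will construct a simple pairwise \emph{crossing} arrangement $\F'$ on the same $n$ pseudocircles containing at least $t$ digons in general, and at least $t + d$ digons when $n > 3$.

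The perturbation is defined locally at each touching point. At a touching point $p$ of two pseudocircles $c_1, c_2 \in \F$, simplicity guarantees that $p$ lies on no third pseudocircle, so there is a small neighborhood $U_p$ of $p$ meeting only $c_1$ and $c_2$. Inside $U_p$ I nudge $c_2$ slightly toward $c_1$ so that they cross transversally at two points close to $p$, bounding a thin digon. Performing this simultaneously at all $t$ touching points yields $\F'$: it is simple, pairwise crossing, and contains at least $t$ new digons, one per former touching pair. For $n > 2$, applying Theorem~\ref{thm:main} to $\F'$ gives $t \leq 2n-2$; the cases $n \leq 2$ are trivial since there is at most one pair of pseudocircles. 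This proves the first assertion.

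For the second assertion, with $n > 3$, it remains to show that the perturbation above preserves all $d$ original digons of $\F$. A digon of $\F$ whose two vertices are both crossings is untouched by the perturbation (which is localized at touching points), so it suffices to rule out digons with a touching-point vertex when $n > 3$. Suppose a digon $D$ has a vertex $p$ arising from a touching of $c_1$ and $c_2$. The two edges of $D$ meeting at $p$ lie along $c_1$ or $c_2$; if one edge lay on $c_1$ and the other on $c_2$, the second vertex of $D$ would have to be a second intersection of $c_1$ and $c_2$, but these curves meet only at $p$. Hence both edges of $D$ lie on a single pseudocircle $c$ and together comprise all of $c$, so $D$ is one of the two regions into which $c$ divides the plane. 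In particular, the two vertices of $D$ are the only intersection points of $c$ with the other pseudocircles of $\F$. But $\F$ is pairwise intersecting, so $c$ meets each of the $n-1$ remaining pseudocircles, contributing at least $n-1$ vertices on $c$. Therefore $n-1 \leq 2$, contradicting $n > 3$. Consequently every digon of $\F$ is preserved by the perturbation, $\F'$ has at least $t+d$ digons, and Theorem~\ref{thm:main} yields $t+d \leq 2n-2$.

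The main subtlety is this digon-preservation step, and the restriction $n > 3$ is essential: for $n=3$ the degenerate configuration in Figure~\ref{fig:n_3}, where a digon is enclosed by a single pseudocircle with two external touchings, realizes a digon with touching vertices and is exactly the obstacle that the above counting argument eliminates once a fourth pseudocircle must also meet $c$.
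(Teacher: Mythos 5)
Your proof is correct and follows essentially the same route as the paper: perturb each touching point into a thin digon to obtain a simple pairwise crossing arrangement and apply Theorem~\ref{thm:main}, noting that the sum of digons and touchings is preserved when $n>3$. Your digon-preservation step is simply a more detailed justification of the paper's observation that the only obstruction is the configuration of Figure~\ref{fig:n_3}, where a digon bounded by a single pseudocircle has touching points as vertices, which your counting argument correctly rules out for $n>3$.
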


 

\subsection{{{Outline}}} In Section \ref{sec:tools} we collect useful observations and results that we need for the main proof. In Section \ref{sec:mainproof} we prove Theorem \ref{thm:main} and in Section \ref{sec:remarks} we discuss some open questions.

\section{Some terminology and tools}\label{sec:tools}

We begin with some simple definitions and observations, some of which also appear in~\cite{circlepaper} in the context of circles.

 Let $\F$ be a simple family of $n>2$ pairwise crossing pseudocircles in the plane. We may assume (and we will assume from now on) that every pseudocircle in $\F$ supports at least one digon. Otherwise, we can remove any pseudocircle from $\F$ that does not satisfy this condition and argue for the remaining family of pseudocircles. 
 
 As touchings are forbidden in a pairwise crossing family, each digon is supported by exactly two pseudocircles. A digon is called a \emph{lens} if it surrounded by both supporting pseudocircles, and it is called a \emph{lune} otherwise. A lune must be surrounded by precisely one of its supporting pseudocircles, while lying outside the region surrounded by the other (see Figure \ref{fig:lenlune}). Here we ignore the case where the unbounded face is a digon.
 This indeed can be avoided by means of a simple inversion of the plane.

\begin{figure}[!h]
    \centering
    \includegraphics[width=0.5\linewidth]{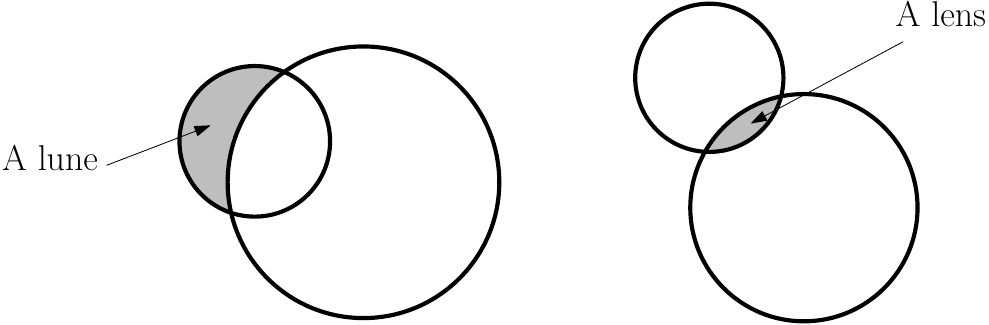}
    \caption{A lens and a lune.}
    \label{fig:lenlune}
\end{figure}

A pseudocircle $\alpha$ is \emph{internal} if it supports a digon that is surrounded by $\alpha$ and  it is called \emph{external} if it supports a digon (necessarily a
lune) that is not surrounded by $\alpha$. The following simple observation is important for the proof.

\begin{prop}\label{observation:int_ext}
A pseudocircle in $\F$ cannot be both internal and external.
\end{prop}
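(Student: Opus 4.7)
The plan is to proceed by contradiction. Suppose some $\alpha\in\F$ is both internal and external, witnessed by a digon $D_1$ supported by $\alpha$ and lying inside $\alpha$, formed with a pseudocircle $\beta_1$, and a lune $D_2$ supported by $\alpha$ and lying outside $\alpha$ (hence inside its other supporting pseudocircle $\beta_2$). Write $b_i$ for the arc of $\beta_i$ bounding $D_i$, for $i=1,2$.

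The key observation is that, since $D_1$ and $D_2$ are faces of the arrangement, their bounding arcs $b_1$ and $b_2$ contain no intersection point with any pseudocircle other than $\alpha$. The curve $\beta_i$ crosses $\alpha$ in exactly two points, which split $\beta_i$ into two arcs, one inside $\alpha$ and one outside. From the fact that $D_1$ lies inside $\alpha$ and $D_2$ lies outside $\alpha$, it follows that $b_1$ is the \emph{entire} arc of $\beta_1$ inside $\alpha$, and $b_2$ is the entire arc of $\beta_2$ outside $\alpha$. Consequently, no pseudocircle other than $\alpha$ crosses $\beta_1$ inside $\alpha$, and no pseudocircle other than $\alpha$ crosses $\beta_2$ outside $\alpha$.

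The argument then splits on whether $\beta_1=\beta_2$. If $\beta_1=\beta_2=:\beta$, then $b_1\cup b_2$ covers $\beta$ apart from the two points of $\alpha\cap\beta$, so $\beta$ is crossed by no pseudocircle other than $\alpha$; together with the pairwise crossing assumption and $n>2$, this is a contradiction. Otherwise $\beta_1\ne\beta_2$, and $\beta_1,\beta_2$ cross at exactly two points. Since no pseudocircle besides $\alpha$ crosses $\beta_2$ along $b_2$, both points of $\beta_1\cap\beta_2$ must lie on the arc of $\beta_2$ inside $\alpha$; but these same points then also lie on the arc of $\beta_1$ inside $\alpha$, which is exactly $b_1$, contradicting the fact that $b_1$ is uncrossed by any pseudocircle other than $\alpha$.

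The only step that requires a moment of care is the key observation: identifying that $b_i$ must be the entire inside or outside arc of $\beta_i$ with respect to $\alpha$, not a proper subarc. Once this is granted, both cases close in essentially one sentence.
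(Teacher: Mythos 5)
Your proof is correct and takes essentially the same route as the paper's: the key point in both is that the digon edge is the \emph{entire} arc of the second pseudocircle on one side of $\alpha$ and hence carries no vertex of the arrangement, which forces $\beta_1$ and $\beta_2$ to be disjoint, contradicting pairwise crossing. You additionally spell out the degenerate case $\beta_1=\beta_2$, which the paper's proof leaves implicit.
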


\begin{proof}
    
Assume to the contrary that there is a pseudocircle $\alpha \in \F$ which is both external and internal. This means that there exist $\alpha_1,\alpha_2\in \F$ such that $\alpha$ and $\alpha_1$ form a digon not surrounded by $\alpha$ while $\alpha$ and $\alpha_2$ form a digon that is surrounded by $\alpha$ (see Figure \ref{fig:int_ext}).

\begin{figure}[ht]
	\centering
	\includegraphics[height=4cm]{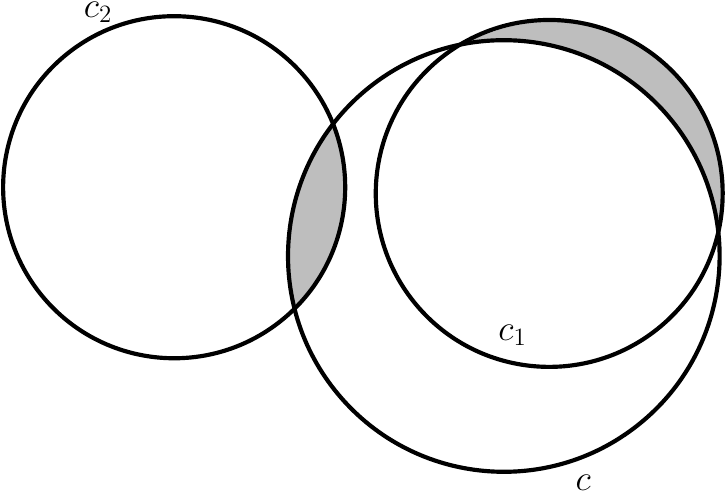}
	\caption{A pseudocircle cannot be both interior and exterior.}
	\label{fig:int_ext}
\end{figure}

In such a case $\alpha_1$ and $\alpha_2$ cannot intersect, contradicting our assumption that we have an arrangement of pairwise crossing pseudocircles. Indeed, $\alpha_{1}$ and $\alpha_{2}$ cannot cross in the region surrounded by $\alpha$ because that subarc of $\alpha_2$ is an edge of a digon. Similarly, $\alpha_{1}$ and $\alpha_{2}$ cannot intersect in the complementary region of the plane because that subarc of $\alpha_1$ is an edge of an other digon.
\end{proof}

Any simple closed Jordan curve in the plane divides the plane into two parts, one bounded (the \emph{interior}) and the other unbounded (the \emph{exterior}). We will refer to these two parts as the \emph{regions} of the curve. 
For a pseudocircle $\alpha \in \F$, we will call the region that contains the digons supported by $\alpha$ the \emph{digon-region} of $\alpha$. By Proposition~\ref{observation:int_ext} and because we assume that every pseudocircle supports at least one digon, every pseudocircle in $\F$ has exactly one digon-region. One  advantage of this definition is that we can treat lunes and lenses in a uniform way: every digon is just the intersection of the digon-regions of the two corresponding pseudocircles supporting it.\footnote{Another way to think about this is to imagine that we work on a sphere, and instead of circles we have caps corresponding to the digon-regions.} This will simplify the case-analysis in some of the proofs. 


\medskip
It is well-known that any family of pairwise crossing pseudocircles can be extended to a larger family of pairwise crossing pseudocircles, and the following lemma shows that we can also do it without destroying the existing digons.

\begin{lemma}\label{lem:base}
 
    Let $\F$ be a simple family of $n>2$ pairwise crossing pseudocircles. Then there is a closed curve $c$ such that $\F \cup \{c\}$ is a simple family of pairwise crossing pseudocircles, and furthermore $c$ does not intersect any of the digons in $\A(\F)$.
\end{lemma}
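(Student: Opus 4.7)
The plan is to build $c$ as a closed curve that lives in a thin tubular neighborhood $T$ of some chosen pseudocircle $\beta\in\F$, tracing $\beta$ closely from one side along most of its length and dipping to the other side along a short arc. I would choose $T$ thin enough that no vertex of $\A(\F)$ off $\beta$ lies in $T$, and that for each $\alpha\in\F\setminus\{\beta\}$ the intersection $\alpha\cap T$ consists of exactly two short arcs, one through each of the two crossings in $\alpha\cap\beta$. Let $T^+$ and $T^-$ denote the interior and exterior components of $T\setminus\beta$.

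Next I would locate a convenient arc of $\beta$ on which to attach $c$. By Proposition~\ref{observation:int_ext}, all digons supported by $\beta$ lie on one side of $\beta$; say without loss of generality on the interior side. The same proposition, applied to the four regions determined by any two crossing pseudocircles, forces any two pseudocircles in $\F$ to form at most one digon, so $\beta$ supports at most $n-1$ digons. Since each $\beta$-digon is bordered by a distinct one of the $2(n-1)$ arcs into which $\beta$ is partitioned by its crossings with other pseudocircles, at least $n-1\geq 2$ of these arcs border a non-digon face on the interior of $\beta$. Fix such an arc $A$ and two distinct points $u,v$ in its interior.

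Now define $c=c^-\cup c^+$, where $c^-\subset T^-$ is a simple arc from $u$ to $v$ close to $\beta$ going the ``long way'' (i.e.\ not through $A$), and $c^+\subset T^+$ is a simple arc from $v$ to $u$ close to $\beta$ going the ``short way'' through $A$, chosen so that $c^+$ lies entirely in the single face of $\A(\F)$ adjacent to $A$ on the interior side (which is non-digon by the choice of $A$). Then $c$ is a simple closed Jordan curve crossing $\beta$ transversally exactly at $u$ and $v$.

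To conclude I would verify three things. First, for each $\alpha\in\F\setminus\{\beta\}$ with $\alpha\cap\beta=\{r,s\}$, the local arc of $\alpha$ in $T$ through $r$ (respectively through $s$) runs from $T^+$ to $T^-$, so whichever side of $\beta$ contains $c$ near $r$ (namely $c^-$ if $r$ lies outside $A$ and $c^+$ if $r\in A$), that side of $c$ must cross this local arc exactly once and transversally --- yielding exactly two transverse crossings of $c$ with $\alpha$. Second, $c$ avoids every digon: digons supported by $\beta$ all lie on the interior side and are avoided by $c^-$; the arc $c^+$ is contained in a non-digon face; and any digon not supported by $\beta$ is disjoint from $\beta$ (its boundary arcs come from other pseudocircles and the arrangement is simple), hence disjoint from $T$ for $T$ chosen thin enough. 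Third, simplicity of $\F\cup\{c\}$ follows from the choice of $T$ to miss all non-$\beta$ vertices and from a generic choice of $u,v$. The main obstacle is the crossing-count in the first verification: one must argue, via a careful local topological picture in $T$, that a single transverse crossing of $\alpha$ and $\beta$ at $r$ contributes exactly one crossing of $\alpha$ with $c$, regardless of which side of $\beta$ the curve $c$ happens to occupy near $r$.
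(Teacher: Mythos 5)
Your proposal is correct and follows essentially the same route as the paper: run the new curve in a thin neighborhood of a chosen pseudocircle on the side opposite its digon-region, and dip to the other side along a subarc whose adjacent face is not a digon in order to create the two required crossings with that pseudocircle. Your counting argument for the existence of such a subarc and your local analysis of the crossing parity in the tube simply make explicit what the paper's proof leaves to the reader.
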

\begin{proof}
    Let $\alpha$ be a pseudocircle in $\F$. From Proposition~\ref{observation:int_ext} we know that one of the two regions of $\alpha$ (the one that is not the digon-region of $\alpha$) does not contain any digon supported by $\alpha$. Let $c$ be a curve that is running very close to $\alpha$ outside of the digon-region of $\alpha$. Then $c$ intersects each pseudocircle in $\F \setminus \{\alpha\}$ exactly twice and it intersects no digon of $\A(\F)$. 
    However, $c$ does not intersect $\alpha$. In order to fix this we notice that because $\F$ is a nontrivial arrangement of pseudocircles, there is a subarc $s$ of $\alpha$ that is disjoint from any digon supported by $\alpha$ and consequently it is disjoint from any digon in $\A(\F)$. We can modify $c$ along $s$ such that the resulting simple closed curve crosses $\alpha$ at two points yielding the desired extension of $\F$ (see Figure \ref{fig:canonincal}).
\end{proof}

 \begin{figure}[!h]
     \centering
     \includegraphics[width=6cm]{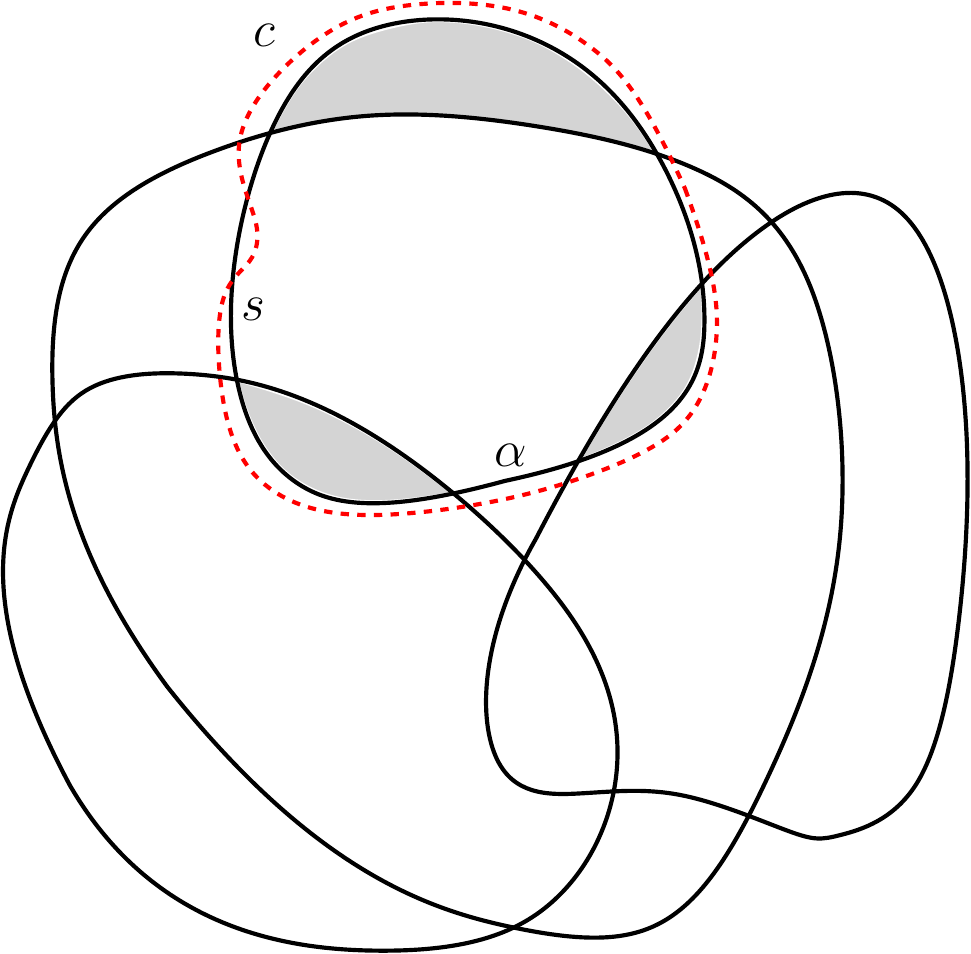}
     \caption{Extending $\F$ with a curve $c$ which avoids the digons in $\A(\F)$ and intersects every pseudocircle in $\F$ exactly twice.}
     \label{fig:canonincal}
 \end{figure}

For a pseudocircle $\alpha$ with a given orientation as a closed curve and points $v_1,v_2$ on $\alpha$ we denote by $[v_1,v_2]_\alpha$ (resp., $(v_1,v_2)_\alpha$) the closed (resp., open) segment of $\alpha$ from $v_1$ to $v_2$ following $\alpha$ in the given orientation. 

\begin{prop} \label{prop:cyclic-order}
	Let $\{\alpha_1,\alpha_2,c\}$ be a simple family of pairwise crossing pseudocircles  
	such that $\alpha_1$ and $\alpha_2$ form a digon.
	Suppose that $c$ is oriented and let $\alpha_j^{\rm in}$ (resp., $\alpha_j^{\rm out}$) be the intersection point of $c$ and $\alpha_j$ in which $c$ enters (resp., leaves) the digon-region of $\alpha_j$, for $j=1,2$.
	Then $\alpha_1^{\rm in}, \alpha_1^{\rm out}, \alpha_2^{\rm in}, \alpha_2^{\rm out}$ is the cyclic order along $c$ of the four intersection points of $\alpha_1$ and $\alpha_2$ with $c$.
\end{prop}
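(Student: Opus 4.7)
The plan is to pin down the digon $D$ formed by $\alpha_1$ and $\alpha_2$ as the lens-shaped region $R_1\cap R_2$, where $R_j$ denotes the digon-region of $\alpha_j$, deduce that $c$ must avoid it, and then read off the cyclic order.

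First I would view the arrangement on the sphere, so that each $R_j$ is an open topological disk bounded by $\alpha_j$ meeting the other disk $R_{3-j}$ in a lens. By hypothesis, $D$ is a face of $\A(\{\alpha_1,\alpha_2,c\})$ with exactly one $\alpha_1$-edge and one $\alpha_2$-edge, hence with only two vertices; these vertices lie on both $\alpha_1$ and $\alpha_2$ and are therefore the two crossing points $p,q$ of $\alpha_1$ and $\alpha_2$. Each of the two edges of $D$ is a full arc of its pseudocircle between $p$ and $q$, containing no intersection with $c$. A quick local check, using that crossing $\alpha_1$ flips $R_1$-membership while preserving $R_2$-membership, identifies the $\alpha_1$-edge of $D$ as the arc of $\alpha_1$ lying inside $R_2$, and symmetrically for the $\alpha_2$-edge. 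Those two arcs together are exactly $\partial(R_1\cap R_2)$, so $\partial D=\partial(R_1\cap R_2)$; since $D$ is a connected region contained in $R_1\cap R_2$ with the same boundary, $D=R_1\cap R_2$.

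Next, since $D$ is a face of the arrangement, $c$ is disjoint from $D=R_1\cap R_2$. By the definition of $\alpha_j^{\rm in}$ and $\alpha_j^{\rm out}$, the arc $A_j=[\alpha_j^{\rm in},\alpha_j^{\rm out}]_c$ (taken along the orientation of $c$) lies entirely in $R_j$ for $j=1,2$. If $A_1$ and $A_2$ had a common point $x$, then $x\in R_1\cap R_2=D$, contradicting $c\cap D=\emptyset$. Thus $A_1$ and $A_2$ are disjoint arcs on $c$, each running from its ``in'' endpoint to its matching ``out'' endpoint, so the only cyclic order of the four labelled points compatible with this is $\alpha_1^{\rm in},\alpha_1^{\rm out},\alpha_2^{\rm in},\alpha_2^{\rm out}$, as claimed.

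The main obstacle is the identification $D=R_1\cap R_2$; once $\partial D$ is correctly matched with the two arcs of $\partial(R_1\cap R_2)$, the rest is a short topological disjointness argument.
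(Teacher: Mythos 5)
Your proposal is correct and takes essentially the same route as the paper: both arguments rest on the facts that the digon is exactly $R_1\cap R_2$ and that $c$ must avoid it, and both conclude that each pseudocircle's ``in''/``out'' pair must be consecutive along $c$. The only difference is packaging --- the paper observes that $\alpha_2\cap R_1$ is precisely the digon's $\alpha_2$-edge, so a point of $\alpha_2\cap c$ inside $R_1$ would force $c$ into the digon, whereas you phrase the same obstruction as the disjointness of the arcs $c\cap R_1$ and $c\cap R_2$ (where, strictly, you should take a common point \emph{interior} to both arcs so that it lies in the open regions $R_1$ and $R_2$; this is automatic since the four endpoints are distinct).
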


\begin{proof}
If $\alpha_2^{\rm in} \in (\alpha_1^{\rm in},\alpha_1^{\rm out})_c$ or $\alpha_2^{\rm out} \in (\alpha_1^{\rm in},\alpha_1^{\rm out})_c$, then we have a point from $\alpha_2 \cap c$ which lies in the digon-region of $\alpha_1$. 
However, the intersection of $\alpha_2$ with the digon-region of $\alpha_1$ consists only of the subarc of $\alpha_2$ which bounds the digon formed by $\alpha_1$ and $\alpha_2$.
This implies that $c$ intersects this digon which is impossible.
Therefore, $\alpha_1^{\rm out}$ must follow $\alpha_1^{\rm in}$ in the cyclic order and, similarly, $\alpha_2^{\rm out}$ must follow $\alpha_2^{\rm in}$.
\end{proof}


A \emph{topological graph} is a graph drawn in the plane such that its vertices are drawn as distinct points and its edges are drawn as Jordan arcs connecting the corresponding points.
Apart from its endpoints, an edge of a topological graph cannot contain any drawn vertex.
Furthermore, every two edges in a topological graph intersect at a finite number of points, each of which is either a common endpoint or a crossing point.
A graph is \emph{planar} if it can be drawn as a topological graph where no pair of edges crosses.
By the Strong Hanani-Tutte Theorem, it is enough to require an even number of crossings between independent edges.\footnote{Two edges are \emph{independent} if they do not share an endpoint.}



\begin{theorem}[Strong Hanani-Tutte Theorem~\cite{Tutte70}]\label{thm:strongHanani}
	A graph is planar if and only if it can be drawn as a topological graph in which every two independent edges cross an even number of times.
\end{theorem}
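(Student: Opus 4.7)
The ``only if'' direction is immediate: a planar drawing has zero crossings between any two edges, and zero is even.

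For the nontrivial direction, the plan is to reduce to Kuratowski's theorem, which characterizes planar graphs as exactly those containing no subdivision of $K_5$ or $K_{3,3}$. First I would verify that the independent-even-crossing property is preserved under suppression of degree-two vertices: subdividing an edge does not change the parity of its intersection with any other edge, and two originally independent edges remain independent after suppression. Thus, if a non-planar graph admits an independent-even drawing, then so does $K_5$ or $K_{3,3}$ (by restricting to a Kuratowski subdivision and suppressing subdivision vertices). It therefore suffices to rule out such drawings of $K_5$ and $K_{3,3}$.

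For these two minimal obstructions the plan is a mod-$2$ parity argument. In $K_5$, fix a Hamiltonian cycle $C$ drawn as a simple closed curve (after an isotopy that does not change the parity of any crossing between independent edges, which one justifies by a separate redrawing lemma that ``cleans up'' one cycle). Each of the remaining five chords has both endpoints on $C$, and for each pair of chords one can write down, in terms of the cyclic positions of their endpoints along $C$, whether they ``must'' cross an odd number of times in any drawing realizing that rotation system. Summing these parities over the pairs of chords of $K_5$ (respectively, over pairs of non-adjacent edges in $K_{3,3}$) yields a fixed odd total, contradicting the assumption that every independent pair crosses evenly. The analogous computation for $K_{3,3}$ uses a $6$-cycle in place of the Hamiltonian $5$-cycle.

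The main obstacle is the parity computation combined with the preliminary cleaning of the cycle $C$, since one must control what happens to crossings among the chords when $C$ is straightened. The cleanest way to organize this is the classical \emph{redrawing lemma} of Pelsmajer--Schaefer--\v{S}tefankovi\v{c}: if every two independent edges in a drawing of $G$ cross evenly, then for any vertex $v$ the edges incident to $v$ can be redrawn so that no two of them cross each other, while preserving the parity of the crossing number of every pair of independent edges. Iterating this lemma over all vertices yields a drawing with no crossings between edges sharing an endpoint and no crossings between independent edges, i.e., a planar embedding. The heart of the argument is thus this single redrawing lemma, whose proof is a careful topological surgery near $v$.
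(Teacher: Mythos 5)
This statement is not proved in the paper at all: it is quoted as a black box from Tutte's 1970 paper and used only as a tool (to deduce planarity of the drawn double cover $G$). So there is no in-paper proof to compare against, and your proposal must be judged on its own. The first half of your plan --- reduce via Kuratowski's theorem to $K_5$ and $K_{3,3}$ (noting that independent branches of a subdivision are vertex-disjoint paths, so the induced drawing of $K_5$ or $K_{3,3}$ inherits the independent-even property), then derive a contradiction from a mod-$2$ invariant --- is indeed the standard elementary route and is correct in outline. One caveat on the parity computation: a chord of your Hamiltonian cycle $C$ need not stay on one side of $C$, so ``interleaved endpoints force an odd crossing number'' is not automatic; the cleaner invariant is that for $K_5$ and $K_{3,3}$ the sum of $\mathrm{cr}(e,f)$ over all independent pairs is odd in every drawing, because sliding an edge over a vertex $v$ changes the sum by the number of edges at $v$ independent of $e$, which is even ($2$) in both graphs, and a reference drawing realizes the sum $5$ (resp.\ $1$).

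The genuine gap is in your final paragraph. Iterating the Pelsmajer--Schaefer--\v{S}tefankovi\v{c} redrawing lemma does \emph{not} produce a planar embedding here: the lemma only \emph{preserves the parity} of the number of crossings of each independent pair, so a pair crossing $2$ or $4$ times before the surgery may still cross $2$ or $4$ times afterwards. What the vertex-by-vertex cleanup (together with the substantially harder ``removing even crossings'' induction) yields is the \emph{weak} Hanani--Tutte theorem, where the hypothesis is that \emph{all} pairs, adjacent ones included, cross evenly. In the strong setting, adjacent edges are allowed to cross oddly, and no proof is known that avoids a Kuratowski-type obstruction argument (or Tutte's original algebraic machinery); finding one is a recognized open problem. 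So the redrawing lemma cannot be ``the heart of the argument'' --- it is at most a normalization device inside the Kuratowski route, and you should keep the $K_5$/$K_{3,3}$ parity contradiction as the actual engine of the proof.
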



\section{Proof of Theorem \ref{thm:main}}\label{sec:mainproof}


Let $\F$ be a simple family of $n>2$ pairwise crossing pseudocircles in the plane such that every pseudocircle in $\F$ supports at least one digon.
By Lemma \ref{lem:base} there is a simple closed curve $c$ that intersects each pseudocircle in $\F$ twice but does not intersect the digons of the arrangement $\A(\F)$. 
We fix the counterclockwise orientation on $c$. Whenever we traverse $c$ or consider a cyclic order of points on it, we do it according to this orientation. Recall that $c$ is an auxiliary pseudocircle and it is not a member of $\F$; in particular we do not care about digons supported by $c$. 

Every pseudocircle $\alpha \in \F$ intersects $c$ at two points such that at one of them $c$ enters the digon-region of $\alpha$ and at the other $c$ leaves this digon-region. We denote these points by $\alpha^{\rm in}$ and $\alpha^{\rm out}$, respectively. See for example Figure \ref{fig:vertices_edges}. In this figure, the digon-regions of $\alpha$ and $\gamma$ are the interior regions of these curves, whereas the digon-region of $\beta$ is its exterior region.

\begin{figure}[!h]
    \centering
    \includegraphics[width=9cm]{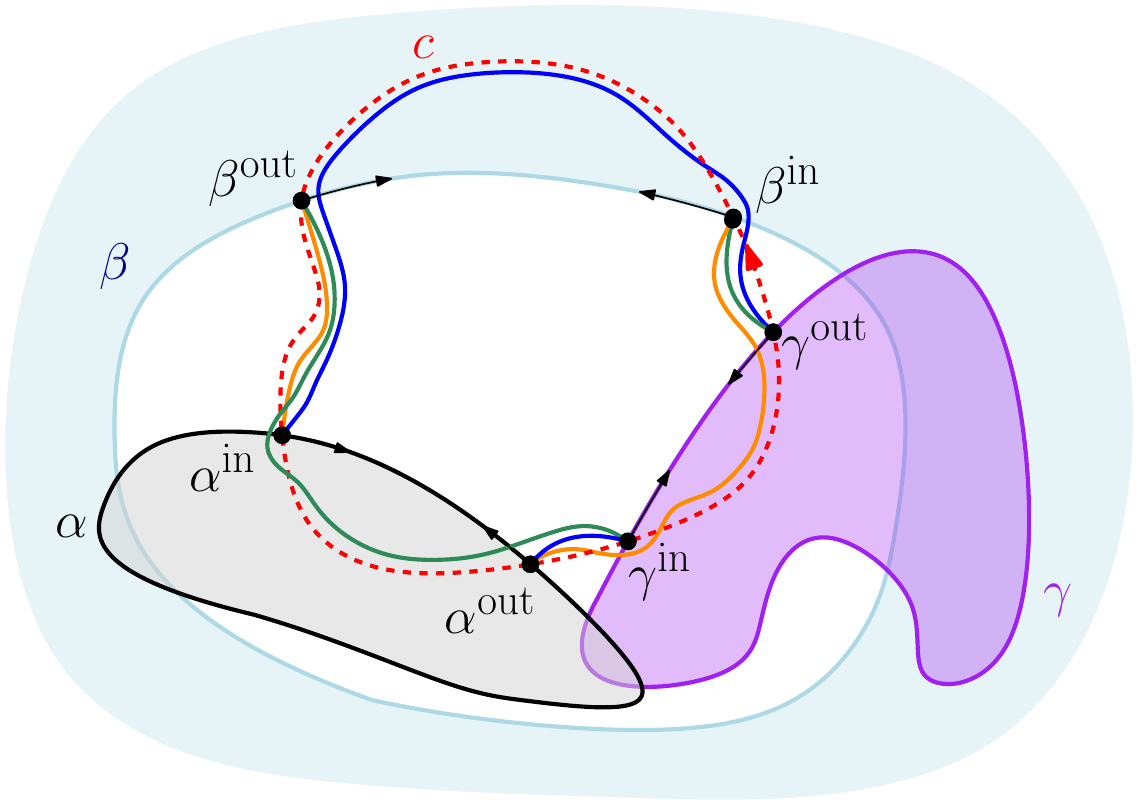}
    \caption{Three pseudocircles $\alpha$, $\beta$ and $\gamma$ and a drawing of the corresponding bipartite double cover graph $G$. For example, $\alpha$ and $\gamma$ form a digon, hence we draw the edges $(\alpha^{\rm out},\gamma^{\rm in})$ and $(\gamma^{\rm out},\alpha^{\rm in})$ along $c$. Considering the (blue) edge $(\gamma^{\rm out},\alpha^{\rm in})$, starting at $\beta^{\rm in}$ (resp., $\beta^{\rm out}$) and following $\beta$ towards the interior of $c$, we first encounter $\alpha$ (resp., $\gamma$). Therefore, the drawn edge $(\gamma^{\rm out},\alpha^{\rm in})$ ``passes by'' $\beta^{\rm in}$ (resp., $\beta^{\rm out}$) within the exterior (resp., interior) of $c$.} 
    \label{fig:vertices_edges}
\end{figure}

\subsection{The digon graph and its double cover}
In order to bound from above the number of digons in $\A(\F)$ we consider the graph $G_{d}$ whose 
vertex set corresponds to the pseudocircles in $\F$ and whose edge set corresponds to pairs of pseudocircles which form a digon in $\A(\F)$
(since $\A(\F)$ is nontrivial and crossing a pair of pseudocircles may not form more than one digon).
We call $G_{d}$ the \emph{digon graph} of the arrangement.
Bounding the number of edges in $G_{d}$ is equivalent to bounding the number of digons in $\A(\F)$.

For our proof, we consider the \emph{bipartite double cover} of $G_{d}$ - this graph, denote it by $G$, has two vertices, $\alpha^{\rm in}$ and $\alpha^{\rm out}$ for every vertex $\alpha$ of $G_d$, and two edges, $(\alpha^{\rm out},\beta^{\rm in})$ and $(\beta^{\rm out},\alpha^{\rm in})$ for every edge $(\alpha,\beta)$ of $G_d$.
Thus, the number of edges of $G$ is twice the number of digons in $\A(\F)$.
By construction $G$ is bipartite.
It remains to show that $G$ is also planar and therefore has at most $2|V(G)|-4=4n-4$ edges. This will prove Theorem~\ref{thm:main}.

\subsection{Drawing $G$ in the plane}
We draw $G$ as a topological graph in the plane.
By abuse of notation, we will not distinguish between $G$ and its drawing.
The vertices of $G$ are represented by the points $\{\alpha^{\rm in} \mid \alpha \in \F\}\cup\{\alpha^{\rm out} \mid \alpha\in \F\}$ as defined above. 
Thus, for every two pseudocircles $\alpha, \beta \in \F$ that form a digon in $\A(\F)$ there should be two drawn edges $(\alpha^{\rm out},\beta^{\rm in})$ and $(\beta^{\rm out},\alpha^{\rm in})$.
These edges will satisfy the following properties:
\begin{enumerate}
    \item[(i)] An edge $(\alpha^{\rm out},\beta^{\rm in})$ connects $\alpha^{\rm out}$ and $\beta^{\rm in}$, while avoiding all other vertices of $G$;
    \item[(ii)] every two edges intersect finitely many times; and
    \item[(iii)]  an edge $(\alpha^{\rm out},\beta^{\rm in})$ follows very closely the subarc of $c$ starting at $\alpha^{\rm out}$ and ending at $\beta^{\rm in}$, counterclockwise along $c$. It may cross the curve $c$ several times, but it does not intersect the digon-regions of neither $\alpha$ nor $\beta$.
\end{enumerate}

Properties (i), (ii), and (iii) can be easily satisfied.
For the second part of (iii) note that Proposition~\ref{prop:cyclic-order} implies that when following $c$ from $\alpha^{\rm out}$ to $\beta^{\rm in}$ we remain outside the digon-regions of $\alpha$ and $\beta$.

We need one more property that the drawing of $(\alpha^{\rm out},\beta^{\rm in})$
should satisfy. This is the most crucial property for the proof and it has to do with 
how $(\alpha^{\rm out},\beta^{\rm in})$ is drawn with respect to the intermediate
vertices of $G$ along the arc on $c$ from $\alpha^{\rm out}$ to $\beta^{\rm in}$. We use a modification of the drawing rule that appears in~\cite{ANPPSS04}.



Let $v \in V$ be an intermediate vertex of $G$ on $c$ along the arc from $\alpha^{\rm out}$
to $\beta^{\rm in}$. We will need to indicate whether the edge $(\alpha^{\rm out},\beta^{\rm in})$
passes near $v$ in the interior region of $c$ or in its exterior region.
In the first case, we say that the edge \emph{passes $v$ from the inside} whereas in the second case it \emph{passes $v$ from the outside}. Other than these inside/outside decisions and properties (i)-(iii), $(\alpha^{\rm out},\beta^{\rm in})$ can be drawn arbitrarily.

We determine whether $(\alpha^{\rm out},\beta^{\rm in})$ passes $v$ from the inside or 
from the outside according to the value $d(v,(\alpha^{\rm out},\beta^{\rm in}))$ that we define in the following way.
Let $\gamma$ be the pseudocircle in $\F$ that intersects $c$ at $v$. It follows from Proposition~\ref{prop:cyclic-order} that $\gamma$ must be different from $\alpha$ and $\beta$. We follow the curve $\gamma$ starting from $v$ in the direction \emph{entering the interior region of $c$}. 
If we intersect $\alpha$ sooner than we intersect $\beta$, then $d(v,(\alpha^{\rm out},\beta^{\rm in}))=1$, otherwise $d(v,(\alpha^{\rm out},\beta^{\rm in}))=2$.\footnote{Thus, the value $1$ or $2$ represents whether we encounter the first or the second pseudocircle among the two pseudocircles of the ordered pair $(\alpha^{\rm out}, \beta^{\rm in})$.} Since $\F$ is a simple family of pairwise crossing curves,  $d(v,(\alpha^{\rm out},\beta^{\rm in}))$ is well-defined.
If $d(v,(\alpha^{\rm out},\beta^{\rm in}))=1$, then we pass $v$ from the inside. Otherwise $d(v,(\alpha^{\rm out},\beta^{\rm in}))=2$ and we pass $v$ from the outside.
See Figure \ref{fig:vertices_edges} for examples. 



Having described the drawing of the graph $G$ in the plane, we will conclude the proof by showing that every two independent edges of $G$ cross an even number of times which implies that $G$ is planar by the Strong Hanani-Tutte Theorem (Theorem \ref{thm:strongHanani}).

\subsection{Independent edges cross evenly}

Let $(\alpha^{\rm out},\beta^{\rm in})$ and $(\gamma^{\rm out},\delta^{\rm in})$ be two independent edges of $G$.
The parity of the number of crossings between two edges depends solely on how one edge passes by a vertex of the other edge.
For example, if the cyclic order of their endpoints is $\alpha^{\rm out}, \gamma^{\rm out}, \delta^{\rm in}, \beta^{\rm in}$, then the two edges cross evenly if and only if $(\alpha^{\rm out},\beta^{\rm in})$ passes by both of $\gamma^{\rm out}$ and $\delta^{\rm in}$ from the inside or both of them from the outside.
Therefore, it is enough to consider all the possible topologically different arrangements of the at most four involved pseudocircles (actually five because of $c$) and verify that in each of them $(\alpha^{\rm out},\beta^{\rm in})$ and $(\gamma^{\rm out},\delta^{\rm in})$ cross an even number of times.
Using a computer we have verified this, still, for the remainder of the section (and of the proof) we provide a non-computer-aided proof.

\medskip
We begin by observing that it is enough to consider the case that the involved pseudocircles $\alpha$, $\beta$, $\gamma$, and $\delta$ are distinct.
Indeed, $(\alpha^{\rm out},\beta^{\rm in})$ and $(\gamma^{\rm out},\delta^{\rm in})$ are independent edges, therefore, $\alpha \ne \gamma$ and $\beta \ne \delta$.
Still, it is possible that $\alpha=\delta$ or $\beta=\gamma$. If $\alpha = \delta$ and $\beta = \gamma$ then by Proposition \ref{prop:cyclic-order} the edges do not cross.
Otherwise, we add a new pseudocircle as a substitute in the following way.
Suppose without loss of generality that $\beta=\gamma$ (if $\alpha=\delta$ we switch the labels $\alpha$ and $\gamma$ and the labels $\beta$ and $\delta$).
Then we add a new pseudocircle $\gamma'$ that goes very close to $\beta=\gamma$
outside of the digon-region of $\beta=\gamma$ except for near the vertices of the digon formed by 
$\beta=\gamma$ and $\delta$. There, the pseudocircle $\gamma'$ goes into the digon-region of $\beta=\gamma$
thus destroying the digon formed by $\beta=\gamma$ and $\delta$ and creating a new digon 
with $\delta$ that is fully contained in the digon that was destroyed.
See Figure~\ref{fig:gamma_prime} for an example.

\begin{figure}
    \centering
    \includegraphics[width=8cm]{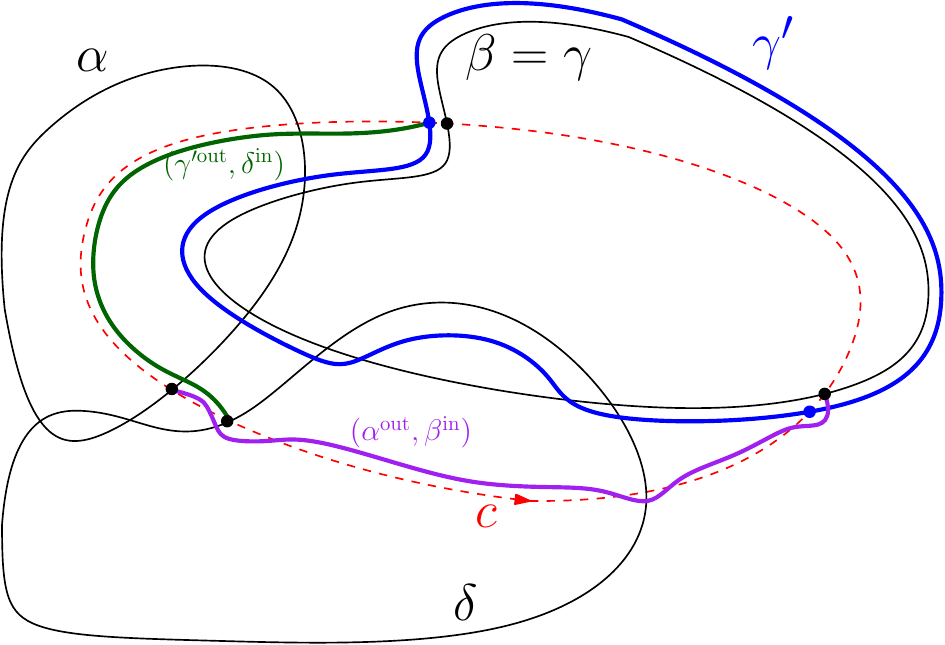}
    \caption{If $(\alpha^{\rm out},\beta^{\rm in})$ and $(\gamma^{\rm out},\delta^{\rm in})$ are edges and $\beta=\gamma$, then we can replace $\gamma$ with $\gamma'$ such that $(\alpha^{\rm out},\beta^{\rm in})$ and $(\gamma'^{\rm out},\delta^{\rm in})$ maintain the same number of crossings as the original edges.}
    \label{fig:gamma_prime}
\end{figure}

The edge $(\gamma'^{\rm out},\delta^{\rm in})$ starts at $\gamma'^{\rm out}$ and follows the edge $(\gamma^{\rm out},\delta^{\rm in})$, while $(\alpha^{\rm out},\beta^{\rm in})$ passes by $\gamma'^{\rm in}$ according the above rules.
The number of crossings of the two independent edges $(\alpha^{\rm out},\beta^{\rm in})$ and $(\gamma'^{\rm out},\delta^{\rm in})$ is equal to that of the original edges 
because $(\gamma'^{\rm out},\delta^{\rm in})$ and $(\gamma^{\rm out},\delta^{\rm in})$ are drawn in precisely the same way 
as far as passing from the inside or from the outside of intermediate vertices of $G$.
Note that the new vertex $\gamma'^{\rm in}$ becomes an intermediate vertex for $(\alpha^{\rm out}, \beta^{\rm in})$, however, it is not an endpoint of $(\gamma'^{\rm out},\delta^{\rm in})$, thus it does not matter whether  $(\alpha^{\rm out}, \beta^{\rm in})$ passes it from the inside or from the outside.

\medskip
Henceforth we assume that the pseudocircles $\alpha$, $\beta$, $\gamma$ and $\delta$ are distinct. 
We proceed by considering the six possible cyclic orders of the vertices $\alpha^{\rm out}$, $\beta^{\rm in}$, $\gamma^{\rm out}$ and $\delta^{\rm in}$.
If the cyclic order is $(\alpha^{\rm out}, \beta^{\rm in}, \gamma^{\rm out}, \delta^{\rm in})$, then $(\alpha^{\rm out},\beta^{\rm in})$ and $(\gamma^{\rm out},\delta^{\rm in})$ do not intersect.
The cyclic orders $(\alpha^{\rm out},\gamma^{\rm out},\beta^{\rm in},\delta^{\rm in})$ and $(\alpha^{\rm out},\delta^{\rm in},\beta^{\rm in},\gamma^{\rm out})$ are symmetric (by switching the labels $\alpha$ and $\gamma$ and the labels $\beta$ and $\delta$), and the same goes for the cyclic orders $(\alpha^{\rm out},\beta^{\rm in},\delta^{\rm in},\gamma^{\rm out})$ and $(\alpha^{\rm out},\gamma^{\rm out},\delta^{\rm in},\beta^{\rm in})$.
Therefore, it is enough to consider the cyclic orders $(\alpha^{\rm out},\gamma^{\rm out},\beta^{\rm in},\delta^{\rm in})$, $(\alpha^{\rm out},\gamma^{\rm out},\delta^{\rm in},\beta^{\rm in})$ and $(\alpha^{\rm out},\delta^{\rm in},\gamma^{\rm out},\beta^{\rm in})$.

Recall that we only care about how each edge passes by the vertices of the other edge. 
The following proposition summarizes the desired properties. 



\begin{prop}\label{prop:conditions}

  Assume that the following conditions are satisfied: 
    \begin{enumerate}
        \item[(A)]  If $\alpha^{\rm out}, \gamma^{\rm out}, \beta^{\rm in}$ and $\delta^{\rm in}$, appear in this cyclic order along $c$, then $d(\gamma^{\rm out},(\alpha^{\rm out},\beta^{\rm in}))\neq d(\beta^{\rm in},(\gamma^{\rm out},\delta^{\rm in}))$;
        \item[(B)]  if $\alpha^{\rm out}, \gamma^{\rm out}, \delta^{\rm in}$ and $\beta^{\rm in}$ appear in this cyclic order along $c$, then $d(\gamma^{\rm out},(\alpha^{\rm out},\beta^{\rm in}))= d(\delta^{\rm in},(\alpha^{\rm out},\beta^{\rm in}))$; and
        \item[(C)]  if $\alpha^{\rm out}, \delta^{\rm in}, \gamma^{\rm out}$ and $\beta^{\rm in}$ appear in this cyclic order along $c$, then $d(\alpha^{\rm out},(\gamma^{\rm out},\delta^{\rm in}))\neq d(\delta^{\rm in},(\alpha^{\rm out},\beta^{\rm in}))$ and $d(\gamma^{\rm out},(\alpha^{\rm out},\beta^{\rm in}))\neq d(\beta^{\rm in},(\gamma^{\rm out},\delta^{\rm in}))$.      
    \end{enumerate}
	Then $(\alpha^{\rm out},\beta^{\rm in})$ and $(\gamma^{\rm out},\delta^{\rm in})$ cross an even number of times.
\end{prop}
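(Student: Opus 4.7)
The plan is to handle each of the three cyclic orders separately by localizing the crossings of $e_1 := (\alpha^{\rm out},\beta^{\rm in})$ and $e_2 := (\gamma^{\rm out},\delta^{\rm in})$ to the arc(s) of $c$ on which the two edge shadows overlap. By property (iii), each edge hugs $c$ and stays outside the digon-regions of its own endpoint pseudocircles, so any crossing must occur in this overlap: the single arc $[\gamma^{\rm out},\beta^{\rm in}]_c$ in case (A); the single arc $[\gamma^{\rm out},\delta^{\rm in}]_c \subseteq [\alpha^{\rm out},\beta^{\rm in}]_c$ in case (B); and the union $[\alpha^{\rm out},\delta^{\rm in}]_c \cup [\gamma^{\rm out},\beta^{\rm in}]_c$ in case (C).

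First I would show that an intermediate vertex $v$ in the overlap which is an endpoint of neither edge contributes no parity. Near $v$, the pseudocircle $\zeta$ through $v$ together with $c$ divides a neighborhood into four sectors; the inside/outside rule pins only the side of $c$ each edge uses, and by coherent drawing choices near $v$ any local crossings of $e_1$ and $e_2$ with $\zeta$ can be made to cancel in pairs. Hence only the \emph{corner} vertices---those endpoints of one edge that are intermediate for the other---can contribute to the parity of crossings.

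Next, the corner analysis translates each corner contribution into a $d$-value. In case (A) the corners are $\gamma^{\rm out}$ and $\beta^{\rm in}$. Near $\gamma^{\rm out}$, the edge $e_2$ emanates along $\gamma$ headed into the interior of $c$---the very direction used to define $d$---while $e_1$ passes on the side of $c$ dictated by $d(\gamma^{\rm out},e_1)$, and a direct local sector count yields a parity contribution determined by $d(\gamma^{\rm out},e_1)$. A symmetric count at $\beta^{\rm in}$, where the roles of starting and ending edge are swapped, gives a contribution determined by $d(\beta^{\rm in},e_2)$ but under the opposite convention, so even total parity is equivalent to the inequality in condition~(A). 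Case~(B) is analogous with both corners being endpoints of $e_2$ inside $e_1$'s shadow, so the two contributions come with matching conventions and even parity becomes the equality in~(B). Case~(C) decomposes into two disjoint overlap arcs, each behaving like case~(A) and producing one inequality, giving the two conditions of~(C).

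The main obstacle is the local computation at each corner, where the topological configuration of $c$, the pseudocircle through the corner, and the two target pseudocircles of the other edge admits several subcases. By using the definition of $d$ to encode each configuration as a single binary value and by exploiting the symmetries between \emph{in} and \emph{out} endpoints and between $e_1$ and $e_2$, this reduces to verifying one representative subcase per corner-type, which is routine but tedious.
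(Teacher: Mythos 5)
Your proposal is correct and takes essentially the same approach as the paper: both arguments reduce the crossing parity to the inside/outside decisions at the four endpoint vertices (intermediate vertices of $G$ that are endpoints of neither edge cannot affect the parity), and then verify the finitely many resulting configurations for each of the three cyclic orders. The paper performs this final verification by inspecting one representative drawing per configuration, whereas you organize it as per-corner parity contributions; this is the same computation.
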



\begin{proof}
  From properties (i), (ii) and (iii) it follows that the parity of the number of crossings between $(\alpha^{\rm out},\beta^{\rm in})$ and $(\gamma^{\rm out},\delta^{\rm in})$ depends only on the inside/outside decisions at the vertices $\alpha^{\rm out}, \beta^{\rm in}, \gamma^{\rm out}$ and $\delta^{\rm in}$. This means that it is enough to check a single drawing for each set of inside/outside decisions that are allowed by the claim. 
  The proof follows by a case analysis that is easy to verify by inspection,
  see Figure~\ref{fig:edge_pairs} for illustrations of the possible cases.
  %
 %
%
%
\begin{figure}[t]
	\centering
	\subfloat[The cyclic order $(\alpha^{\rm out},\gamma^{\rm out},\beta^{\rm in},\delta^{\rm in})$ implies $d(\gamma^{\rm out},(\alpha^{\rm out},\beta^{\rm in})) \ne d(\beta^{\rm in},(\gamma^{\rm out},\delta^{\rm in}))$.]{\includegraphics[width= 6.65cm]{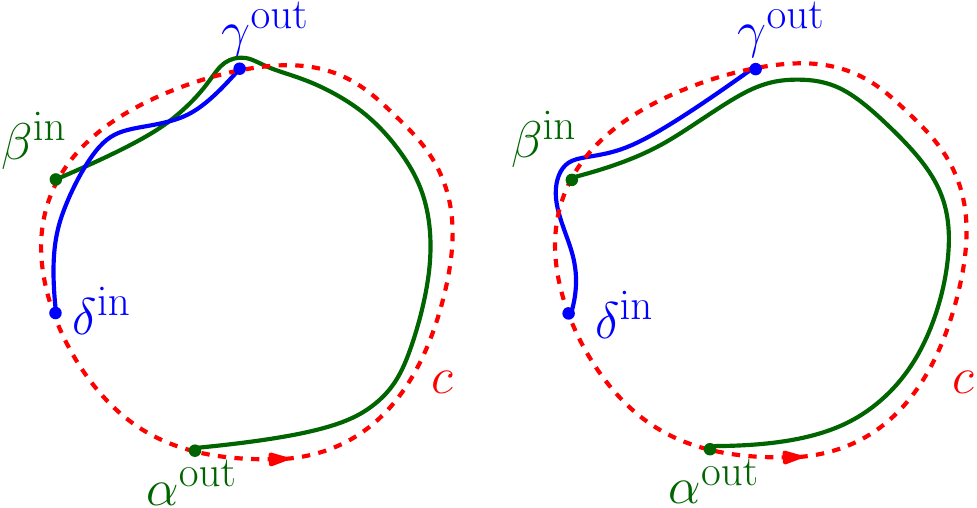}\label{fig:orderA}}
	\hspace{5mm}
	\centering
\subfloat[The cyclic order $(\alpha^{\rm out},\gamma^{\rm out},\delta^{\rm in},\beta^{\rm in})$ implies $d(\gamma^{\rm out},(\alpha^{\rm out},\beta^{\rm in}))= d(\delta^{\rm in},(\alpha^{\rm out},\beta^{\rm in}))$.]{\includegraphics[width= 6.65cm]{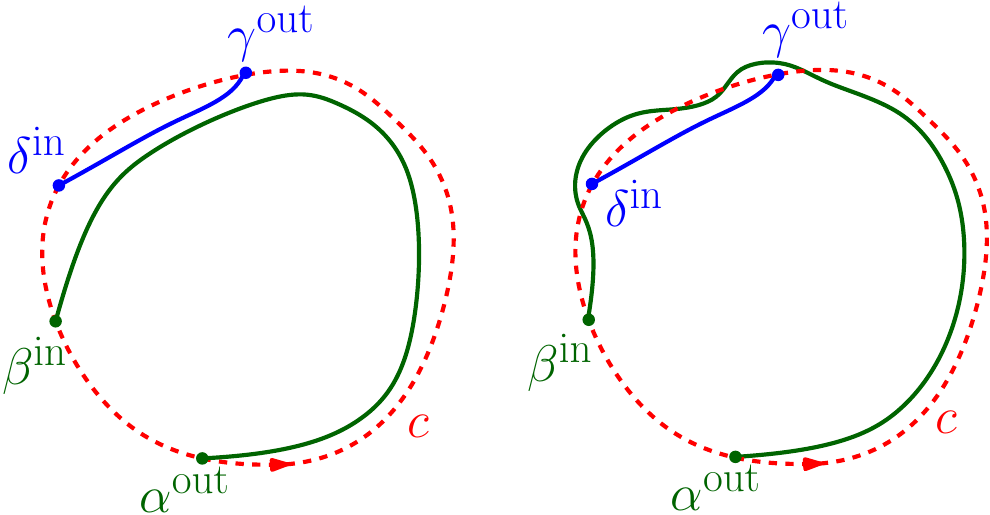}\label{fig:orderB}}
\hspace{5mm}
\subfloat[The cyclic order $(\alpha^{\rm out},\delta^{\rm in},\gamma^{\rm out},\beta^{\rm in})$ implies $d(\alpha^{\rm out},(\gamma^{\rm out},\delta^{\rm in}))\neq d(\delta^{\rm in},(\alpha^{\rm out},\beta^{\rm in}))$ and $d(\gamma^{\rm out},(\alpha^{\rm out},\beta^{\rm in}))\neq d(\beta^{\rm in},(\gamma^{\rm out},\delta^{\rm in}))$.]{\includegraphics[width= 14cm]{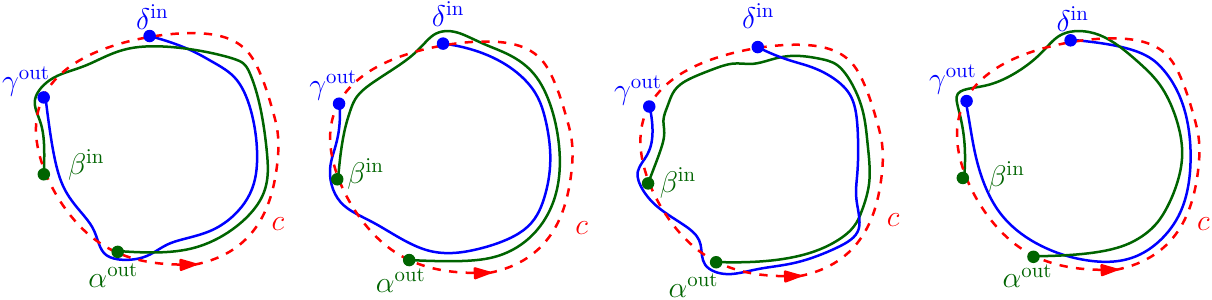}\label{fig:orderC}}
	\caption{$(\alpha^{\rm out},\beta^{\rm in})$ and $(\gamma^{\rm out},\delta^{\rm in})$ cross evenly if the conditions of Proposition~\ref{prop:conditions} hold.} 
	\label{fig:edge_pairs}
\end{figure}
\end{proof}

It remains to prove that the drawing of $G$ satisfies the conditions stated in Proposition~\ref{prop:conditions}.
Conditions (A) and (C) are shown to hold in Proposition~\ref{prop:conditions-A-C}
while (B) is handled in Proposition~\ref{prop:condition-B}.
Note that in Condition~(C) there is a symmetry between the two edges, namely, the cyclic order can also be written as $(\gamma^{\rm out}, \beta^{\rm in}, \alpha^{\rm out}, \delta^{\rm in})$. Therefore it is enough to prove, say, that $d(\gamma^{\rm out},(\alpha^{\rm out},\beta^{\rm in}))\neq d(\beta^{\rm in},(\gamma^{\rm out},\delta^{\rm in}))$ holds for the given cyclic order of the endpoints.




\begin{prop}\label{prop:conditions-A-C}
    If the cyclic of order of $\alpha^{\rm out}, \beta^{\rm in}, \gamma^{\rm out}$ and $\delta^{\rm in}$ along $c$ is either $(\alpha^{\rm out}, \delta^{\rm in}, \gamma^{\rm out}, \beta^{\rm in})$ or $( \alpha^{\rm out}, \gamma^{\rm out}, \beta^{\rm in},\delta^{\rm in})$, then $d(\gamma^{\rm out},(\alpha^{\rm out},\beta^{\rm in}))\neq d(\beta^{\rm in},(\gamma^{\rm out},\delta^{\rm in}))$.
\end{prop}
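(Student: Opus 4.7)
I would prove Proposition~\ref{prop:conditions-A-C} by contradiction, treating each of the two cyclic orders separately. Assume that $d(\gamma^{\rm out},(\alpha^{\rm out},\beta^{\rm in})) = d(\beta^{\rm in},(\gamma^{\rm out},\delta^{\rm in}))$; there are two sub-cases (both values equal $1$ or both equal $2$) which are handled by analogous constructions, so I focus on the sub-case where both equal $1$. The equalities yield two concrete topological arcs: a subarc $\gamma_{0}$ of $\gamma$ starting at $\gamma^{\rm out}$ and heading into the interior of $c$ which meets $\alpha$ at a point $p$ and avoids $\beta$, and a subarc $\beta_{0}$ of $\beta$ starting at $\beta^{\rm in}$ and heading into the interior of $c$ which meets $\gamma$ at a point $q$ and avoids $\delta$.

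The key combinatorial inputs are Proposition~\ref{prop:cyclic-order}, which forces the auxiliary vertices $\alpha^{\rm in},\beta^{\rm out},\gamma^{\rm in},\delta^{\rm out}$ into restricted positions along $c$, and Proposition~\ref{observation:int_ext}, which prevents a single pseudocircle from being both internal and external. Combined with the fact (established in Lemma~\ref{lem:base}) that $c$ avoids every digon of $\A(\F)$, these tools determine on which side of $\alpha$, $\beta$, $\gamma$, and $\delta$ each of the vertices $\alpha^{\rm out}, \beta^{\rm in}, \gamma^{\rm out}, \delta^{\rm in}$ lies, and in turn constrain where the arcs $\gamma_{0}$ and $\beta_{0}$ can live.

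I would then construct a simple closed curve $K$ by concatenating $\gamma_{0}$, a subarc of $\alpha$ from $p$ to $\alpha^{\rm out}$ chosen so as not to traverse the digon formed with $\beta$, and the subarc of $c$ from $\alpha^{\rm out}$ back to $\gamma^{\rm out}$ counterclockwise (which by the cyclic-order hypothesis contains no $\beta$-vertex). The cyclic order together with the bookkeeping above is intended to force $\beta^{\rm in}$ and $q$ to end up on opposite sides of $K$. Since $\beta_{0}$ connects $\beta^{\rm in}$ to $q$ while avoiding $\delta$ and meeting $\gamma$ only at $q$, a careful crossing count of $\beta_{0}\cap K$ (piece by piece along $K$) exhibits the wrong parity relative to the Jordan curve theorem for $K$, yielding the contradiction.

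The main obstacle is the bookkeeping: several topological choices have to be tracked simultaneously — whether $p$ and $q$ lie inside or outside $c$; on which side of $c$ each of the two digons lies; and which of the two subarcs of $\alpha$ from $p$ to $\alpha^{\rm out}$ is to be used in building $K$. Proposition~\ref{observation:int_ext} eliminates several a priori possibilities and Proposition~\ref{prop:cyclic-order} rigidifies the cyclic patterns, which together keep the case list manageable. The symmetry of the cyclic order $(\alpha^{\rm out},\delta^{\rm in},\gamma^{\rm out},\beta^{\rm in})$ under swapping the pairs $(\alpha,\gamma)$ and $(\beta,\delta)$, already highlighted in the excerpt, halves the work required for that cyclic order.
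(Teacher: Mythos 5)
Your reduction to the two sub-cases via the reflection symmetry is fine, as is the choice of the arcs $\gamma_{0}$ and $\beta_{0}$. The gap lies in the two central steps, and it is fatal rather than cosmetic. First, the claim that $\beta^{\rm in}$ and $q$ lie on opposite sides of $K$ is exactly the content of the proposition and is nowhere argued; ``is intended to force'' is not a proof. Second, and worse, the claim is false, so no contradiction can be extracted. Carrying out the crossing count you propose: $\beta_{0}$ meets the arc $[\alpha^{\rm out},\gamma^{\rm out}]_c$ not at all (by Proposition~\ref{prop:cyclic-order}, both $\beta^{\rm in}$ and $\beta^{\rm out}$ lie outside it under either cyclic order); $\beta_{0}$ meets $\gamma_{0}$ not at all ($q$ is the first point of $\gamma$ on $\beta_{0}$, and $q\notin\gamma_{0}$ because $\gamma_{0}$ avoids $\beta$); and $\beta_{0}$ meets your arc of $\alpha$ not at all, since that arc is chosen to miss both points of $\alpha\cap\beta$. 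Hence $\beta_{0}$ is disjoint from $K$, so $\beta^{\rm in}$ and $q$ lie on the \emph{same} side of $K$ --- and this holds whenever $d(\gamma^{\rm out},(\alpha^{\rm out},\beta^{\rm in}))=1$, irrespective of the value of $d(\beta^{\rm in},(\gamma^{\rm out},\delta^{\rm in}))$. The structural reason the plan cannot be repaired in this form is that $\delta$ does not occur in $K$ at all: the only way the hypothesis $d(\beta^{\rm in},(\gamma^{\rm out},\delta^{\rm in}))=1$ enters your argument is through ``$\beta_{0}$ avoids $\delta$'', which cannot affect the parity of $\beta_{0}\cap K$. Since both values of $d(\gamma^{\rm out},(\alpha^{\rm out},\beta^{\rm in}))$ are realizable under each of the two cyclic orders, any argument that derives a contradiction from $d(\gamma^{\rm out},(\alpha^{\rm out},\beta^{\rm in}))=1$ alone must be wrong. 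Two further problems: $K$ need not be simple, since $\gamma_{0}$ may cross $[\alpha^{\rm out},\gamma^{\rm out}]_c$ at $\gamma^{\rm in}$ (nothing pins down $\gamma^{\rm in}$ relative to $\alpha^{\rm out}$, as $\alpha$ and $\gamma$ need not form a digon), so the Jordan curve theorem does not apply as stated; and the pairwise-crossing hypothesis, which must be used somewhere, plays no role in your sketch.

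For contrast, the paper's proof shows where $\delta$ and the pairwise-crossing condition actually bite. It proves the implication $d(\gamma^{\rm out},(\alpha^{\rm out},\beta^{\rm in}))=2\Rightarrow d(\beta^{\rm in},(\gamma^{\rm out},\delta^{\rm in}))=1$ directly, the converse following by the reflection symmetry you mention. Letting $x$ be the first point of $\beta$ met by $\gamma$ after $\gamma^{\rm out}$, it forms the closed curve $t$ from $[\gamma^{\rm out},x]_{\gamma}$, $[\beta^{\rm in},x]_{\beta}$ and $[\gamma^{\rm out},\beta^{\rm in}]_{c}$ --- crucially, an arc of $\beta$ is part of the curve, unlike in your $K$ --- shows that none of $\alpha$, $\beta$, $\gamma$ crosses $t$, and then shows that if $\delta$ met $[\beta^{\rm in},x]_{\beta}$ it would be confined to the triangle $T$ together with the digon-regions of $\beta$ and $\gamma$, forcing $\alpha\cap\delta=\emptyset$ and contradicting that the pseudocircles pairwise cross. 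If you want to keep a contradiction-style argument, the auxiliary closed curve must contain an arc of $\beta$ (or of $\delta$) so that the assumption on $d(\beta^{\rm in},(\gamma^{\rm out},\delta^{\rm in}))$ has something to act on.
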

\begin{proof}
Orient $\beta$ and $ \gamma$ such that they enter the interior of $c$ at $\beta^{\rm in}$ and $\gamma^{\rm out}$, respectively (recall that $\beta \ne \gamma$). 
%
%
%
%
    We will show that if $d(\gamma^{\rm out},(\alpha^{\rm out},\beta^{\rm in}))=2$, then $d(\beta^{\rm in},(\gamma^{\rm out},\delta^{\rm in}))=1$. A symmetric argument (reflect the arrangement and switch the role of $\beta$ and $\gamma$ and also of $\alpha$ and $\delta$) shows that if $d(\beta^{\rm in},(\gamma^{\rm out},\delta^{\rm in}))=1$, then $d(\gamma^{\rm out},(\alpha^{\rm out},\beta^{\rm in}))=2$. Together, these two assertions imply the claim. 

    Suppose that $d(\gamma^{\rm out},(\alpha^{\rm out},\beta^{\rm in}))=2$. This means 
    that as we follow $\gamma$ starting from $\gamma^{\rm out}$ it intersects $\beta$ before it intersects $\alpha$. Let $x$ denote this first intersection point of $\gamma$ and $\beta$ (see Figure \ref{fig:xyb1a2}). 
    We will show that $[\beta^{\rm in},x]_{\beta}$ does not intersect $\delta$, and hence $d(\beta^{\rm in},(\gamma^{\rm out},\delta^{\rm in}))=1$.

Let $t$ be the closed curve which consists of $[\gamma^{\rm out},x]_{\gamma}$, $[\beta^{\rm in},x]_{\beta}$ and $[\gamma^{\rm out},\beta^{\rm in}]_{c}$.
We claim that $t$ is a simple curve that is not crossed by any of $\alpha$, $\beta$, and $\gamma$.

Consider first $\beta$ and observe that it does not cross itself nor can it cross $[\gamma^{\rm out},x]_\gamma$ by the definition of $x$.
From Proposition~\ref{prop:cyclic-order}, it follows that $\beta^{\rm out} \in (\beta^{\rm in},\alpha^{\rm out})_c$ and therefore $\beta$ does not cross $[\gamma^{\rm out},\beta^{\rm in}]_c$, and hence it does not cross $t$.
	
Next, consider $\gamma$ and observe that it does not cross itself nor can it cross $[\gamma^{\rm out},\beta^{\rm in}]_c$ since it follows from Proposition~\ref{prop:cyclic-order} that $\gamma^{\rm in} \in (\delta^{\rm in},\gamma^{\rm out})_c$.
Therefore, $t$ is a simple curve (recall that $\beta$ and $(\gamma^{\rm out},x)_\gamma$ cannot intersect by the definition of $x$).
Let $T$ be the `triangular' region which is bounded by $t$ and is to the right of $\gamma$ (and to the left of $\beta$ and $c$).

Returning to $\gamma$, it remains to show that it cannot intersect $(\beta^{\rm in},x)_\beta$.
Suppose for contradiction that it does intersect $(\beta^{\rm in},x)_\beta$ at a point $y$.
Then either $\gamma$ enters $T$ at $y$ or it leaves $T$ at $y$.
In the first case, following $\gamma$ from $y$, it must leave $T$ since the part of $\gamma$ just before $\gamma^{\rm out}$ lies outside $T$.
However, this means that $\gamma$ must cross $t$ and hence $(\beta^{\rm in},x)_\beta$ at another point which is impossible since $\beta$ and $\gamma$ already intersect at $x$ and $y$, see Figure~\ref{fig:AandC-2}.
    \begin{figure}[!h]
	\centering
	\subfloat[If $\gamma$ enters $T$ at $y$, then it must intersect {$(\beta^{\rm in},x)_\beta$} once more.]{\includegraphics[width= 5cm]{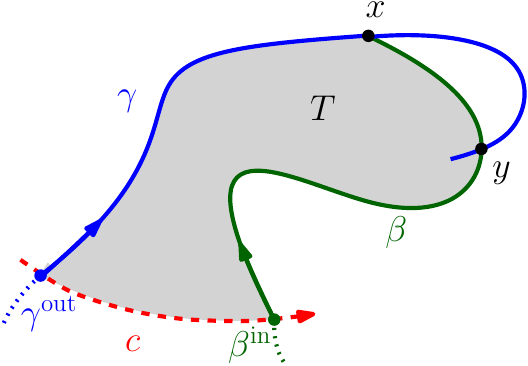}\label{fig:AandC-2}}
	\hspace{5mm}
	\subfloat[If $\gamma$ leaves $T$ at $y$, then $\beta$ enters $T$ at $x$ and must cross $t$ at some other point.]{\includegraphics[width= 5cm]{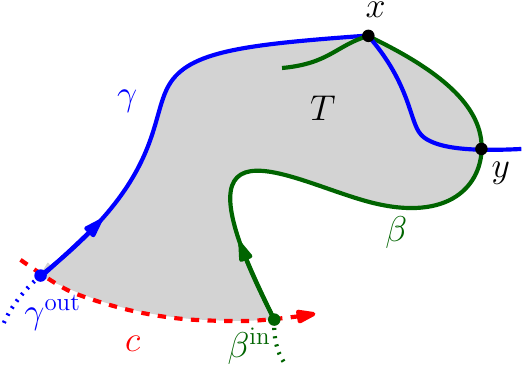}\label{fig:AandC-3}}
	\caption{Illustrations for the proof of Proposition~\ref{prop:conditions-A-C}: when following $\gamma$ into the interior of $c$ we meet $\beta$ sooner than $\alpha$ at a point $x$. Suppose for contradiction that $\gamma$ intersects $[\beta^{\rm in},x]_\beta$ at another point $y$.}
	\label{fig:xyb1a2}
\end{figure}
Considering the second case, if $\gamma$ leaves $T$ at $y$, then it implies that $\gamma$ enters $T$ at $x$, which in turn implies that $\beta$ is also entering $T$ at $x$ (otherwise $\beta$ and $\gamma$ would touch at $x$ which is impossible), see Figure~\ref{fig:AandC-3}.
However, following $\beta$ from $x$, we would have to cross $t$ since the part of $\beta$ just before $\beta^{\rm in}$ lies outside $T$.
Since we have already observed that $\beta$ does not cross $t$ we conclude that $\gamma$ does not cross $t$ either.

We now show that $\alpha$ cannot intersect $t$. 
By the definition of $x$ it cannot intersect $[\gamma^{\rm out},x]_{\gamma}$.
By Proposition~\ref{prop:cyclic-order} we have $\alpha^{\rm in} \in (\beta^{\rm in},\alpha^{\rm out})_c$ 
and it follows that $\alpha$ cannot intersect $[\gamma^{\rm out},\beta^{\rm in}]_{c}$.
Finally, if $\alpha$ crosses $[\beta^{\rm in},x]_{\beta}$, then it must cross it twice since $\alpha$ does not cross the other parts of $t$.
The digon formed by $\alpha$ and $\beta$ must lie outside of $T$ since $T$ lies outside the digon-region of $\beta$.
Consequently, apart from its subarc that bounds this digon $\alpha$ is contained in $T$.
However, this is impossible since then $\gamma$ and $\alpha$ do not intersect.

We are now ready to show that $\delta$ cannot intersect $[\beta^{\rm in},x]_{\beta}$ which would imply that $d(\beta^{\rm in},(\gamma^{\rm out},\delta^{\rm in}))=1$.
Assume to the contrary that $\delta$ intersects $[\beta^{\rm in},x]_{\beta}$. 
Notice that $\delta^{\rm in}, \delta^{\rm out}, \gamma^{\rm out}$, and
$\beta^{\rm in}$ appear in this cyclic order on $c$. This follows from Proposition~\ref{prop:cyclic-order} and the assumptions of the claim.
This implies that $\delta$ does not intersect $[\gamma^{\rm out},\beta^{\rm in}]_c$.
It follows that the two simple closed curves $\delta$ and $t$ must either cross twice or four times, since crossing six times or more would imply that $\delta$ intersects $\gamma$ or $\beta$ more than twice.
If $\delta$ and $t$ cross four times, then $\delta$ crosses twice each of $[\beta^{\rm in},x]_{\beta}$ and $[\gamma^{\rm out},x]_{\gamma}$. 
Therefore, the only part of $\delta$
that is outside the digon-region of $\gamma$ and the digon-region of $\beta$ must be in $T$.
The curves $\alpha$ and $\delta$ cannot cross in the digon-region of $\gamma$, because $\delta$ and $\gamma$ form a digon. Similarly, $\alpha$ and $\delta$ cannot cross in the digon-region of $\beta$ because $\alpha$ and $\beta$ form a digon. Since $\alpha$ is disjoint from $T$ we conclude that $\alpha$ and $\delta$ do not intersect which is a contradiction.
We reach a similar contradiction in the case where $\delta$ crosses $t$ precisely at two points that happen to be on $[\beta^{\rm in},x]_{\beta}$, once again $\alpha$ and $\delta$ cannot intersect.

It remains to consider the case where $\delta$ crosses $[\beta^{\rm in},x]_{\beta}$ once
and crosses $[\gamma^{\rm out},x]_{\gamma}$ once. 
The latter implies that either $x$ or $\gamma^{\rm out}$ belong to the digon formed by $\delta$ and $\gamma$, which is impossible since neither $\beta$ nor $c$ intersects this digon.
\end{proof}


\begin{prop}\label{prop:condition-B}
If $\alpha^{\rm out}, \gamma^{\rm out}, \delta^{\rm in}$ and  $\beta^{\rm in}$ appear in this cyclic order along $c$, then we have $d(\gamma^{\rm out},(\alpha^{\rm out},\beta^{\rm in}))= d(\delta^{\rm in},(\alpha^{\rm out},\beta^{\rm in}))$.
\end{prop}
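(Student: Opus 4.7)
The plan is to argue by contradiction in the spirit of Proposition~\ref{prop:conditions-A-C}. Suppose $d(\gamma^{\rm out},(\alpha^{\rm out},\beta^{\rm in})) \ne d(\delta^{\rm in},(\alpha^{\rm out},\beta^{\rm in}))$. I will describe the argument for the subcase $d(\gamma^{\rm out},(\alpha^{\rm out},\beta^{\rm in}))=1$ and $d(\delta^{\rm in},(\alpha^{\rm out},\beta^{\rm in}))=2$; the other subcase is handled by an entirely analogous construction with the roles of $\alpha$ and $\beta$ exchanged. Orient $\gamma$ and $\delta$ so that each enters the interior of $c$ at $\gamma^{\rm out}$ and $\delta^{\rm in}$, respectively. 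Let $x$ be the first intersection of $\gamma$ with $\alpha$ along this orientation of $\gamma$, and $y$ the first intersection of $\delta$ with $\beta$ along this orientation of $\delta$. Then $[\gamma^{\rm out},x]_\gamma$ is disjoint from $\beta$, $[\delta^{\rm in},y]_\delta$ is disjoint from $\alpha$, and both $x,y$ lie in the interior of $c$.

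As a preliminary observation, Proposition~\ref{prop:cyclic-order} applied to the triples $\{\alpha,\beta,c\}$ and $\{\gamma,\delta,c\}$ implies that the short arc $c_0=[\gamma^{\rm out},\delta^{\rm in}]_c$ contains none of $\alpha^{\rm in},\alpha^{\rm out},\beta^{\rm in},\beta^{\rm out},\gamma^{\rm in},\delta^{\rm out}$, since each of these points is forced onto the complementary arc. Consequently $c_0$ is disjoint from $\alpha,\beta,\gamma,\delta$ except at its endpoints, and it lies outside the digon-regions of all four pseudocircles.

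The heart of the argument is the construction of a simple closed curve $\tau$ assembled from five pieces: $[\gamma^{\rm out},x]_\gamma$; a subarc of $\alpha$ beginning at $x$; a subarc of $\beta$ ending at $y$; the reverse of $[\delta^{\rm in},y]_\delta$; and the reverse of $c_0$. The two middle pieces are joined at a suitably chosen vertex $z$ of the $\alpha\beta$-digon and traverse the boundary of that digon, the precise choice depending on whether the $\alpha\beta$-digon lies inside or outside the interior of $c$. Simpleness of $\tau$ is verified following the template of Proposition~\ref{prop:conditions-A-C}: one checks pairwise that the five constituent arcs meet only at their designated endpoints, exploiting the minimality of $x,y$, Proposition~\ref{prop:cyclic-order}, the pairwise crossing hypothesis (which limits every pair of our pseudocircles to two intersection points), and the fact that $c$ avoids every digon.

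Let $T$ be the bounded region enclosed by $\tau$. The contradiction would then be extracted by analysing where the second vertex of the $\gamma\delta$-digon can sit with respect to $T$: following $\gamma$ beyond $x$ and $\delta$ beyond $y$ inside $c$, the requirement that $\gamma$ and $\delta$ intersect at exactly two points and that each of them crosses $\alpha$ and $\beta$ exactly twice forces one of these pseudocircles to cross $\tau$ at a point which cannot lie on $\gamma$, $\delta$, $\alpha$, $\beta$, or $c$ without exceeding its allotted intersection count. I expect the main obstacle to be precisely the case analysis foreshadowed above, namely the relative positions of the $\alpha\beta$-digon and the $\gamma\delta$-digon with respect to the interior of $c$; in each subcase the definition of $\tau$ requires a small modification (especially in the choice of $z$ and of the direction in which the $\alpha$- and $\beta$-arcs traverse the $\alpha\beta$-digon), but the final contradiction follows in each case from the same local planarity reasoning already developed for Proposition~\ref{prop:conditions-A-C}.
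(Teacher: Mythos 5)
Your setup is sound: the orientations, the first intersection points $x$ and $y$, and the observation that $c_0=[\gamma^{\rm out},\delta^{\rm in}]_c$ avoids all four pseudocircles are all correct and consistent with Proposition~\ref{prop:cyclic-order}. But the proof has a genuine gap at its centre: the contradiction is announced rather than derived. You never specify which pseudocircle is ``forced to cross $\tau$'', on which of the five constituent arcs, or why the resulting crossing exceeds an intersection budget. This matters because, unlike the curve $t$ in Proposition~\ref{prop:conditions-A-C} (three arcs, one each from $\gamma$, $\beta$ and $c$, leaving $\alpha$ and $\delta$ free to be trapped in the region $T$), your $\tau$ already consumes arcs of all four pseudocircles; after its construction $\gamma$ still has one unused crossing with $\alpha$, two with $\beta$ and two with $\delta$, and symmetrically for $\delta$, so a naive count of crossings with $\tau$ does not obviously close. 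The construction of $\tau$ is also underdetermined: you must choose the vertex $z$ of the $\alpha\beta$-digon and the subarcs of $\alpha$ from $x$ to $z$ and of $\beta$ from $z$ to $y$ so that they meet only at $z$ (each candidate arc may or may not contain the other digon vertex), and you have not verified that these arcs avoid $[\delta^{\rm in},y]_\delta$, $[\gamma^{\rm out},x]_\gamma$ and $c_0$; each of these checks depends on where the $\alpha\beta$-digon and the $\gamma\delta$-digon sit relative to the interior of $c$, which is exactly the case analysis you defer.

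For comparison, the paper takes a different, more global route: it pins down the complete combinatorial type of $\A(\{\alpha,\beta,\gamma,\delta\})$, first showing by a digon-region argument that the four points of $\alpha\cap\delta$ and $\alpha\cap\gamma$ must appear on $\alpha$ in the order $A'_1,A'_2,A''_2,A''_1$ (and analogously on $\beta$), then contracting the two digons so that $\alpha,\beta$ and $\gamma,\delta$ become disjoint, leaving a unique arrangement with four digons and six quadrilateral faces. The conclusion follows because no face of that arrangement has boundary meeting both of the arcs on which $\gamma^{\rm out}$ and $\delta^{\rm in}$ would have to lie if the two $d$-values differed, while $[\gamma^{\rm out},\delta^{\rm in}]_c$ connects these two points without crossing any of the four pseudocircles. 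To salvage your approach you would need, for each placement of the two digons, the precise choice of $z$ and the arcs, a full simplicity check for $\tau$, and an explicit identification of the trapped pseudocircle; at that point the work is comparable to reconstructing the paper's classification of the arrangement, so the missing step is not a routine adaptation of Proposition~\ref{prop:conditions-A-C}.
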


\begin{proof}
Orient $\gamma$ and $\delta$ such that they enter the interior region of $c$ at $\gamma^{\rm out}$ and $\delta^{\rm in}$, respectively.
Recall that $c$ is oriented counterclockwise and its interior region lies to its left.
Thus, the digon-region of $\gamma$ lies to its left whereas the digon-region of $\delta$ lies to its right.
Let $\{A'_1,A'_2\} = \delta \cap \alpha$ and $\{B'_1,B'_2\} = \delta \cap \beta$, such that the  cyclic order of these four points along $\delta$ is $A'_1,A'_2,B'_1,B'_2$ (since $\alpha$ and $\beta$ form a digon it follows from Proposition~\ref{prop:cyclic-order} that $A_1'$ and $A'_2$ must be consecutive and so are $B'_1$ and $B'_2$).
Similarly, let $\{A''_1,A''_2\} = \gamma \cap \alpha$ and $\{B''_1,B''_2\} = \gamma \cap \beta$, such that the cyclic order of these four points along $\gamma$ is $A''_1,A''_2,B''_1,B''_2$.

Considering the cyclic order of $A'_1,A'_2,A''_1$ and $A''_2$ along $\alpha$ (under some orientation of $\alpha$), it follows
from Proposition~\ref{prop:cyclic-order} that the first two are consecutive and so are the last two (since $\gamma$ and $\delta$ form a digon).
Therefore, if we orient $\alpha$ such that $A'_2$ follows $A'_1$, then the cyclic order of those four points is either $A'_1,A'_2,A''_1,A''_2$ or $A'_1,A'_2,A''_2,A''_1$.

We claim that the former is impossible.
Indeed, suppose that $A'_1,A'_2,A''_1$ and $A''_2$ appear in this cyclic order on $\alpha$.
Note that it follows from the cyclic order on $\delta$ (resp., $\gamma$) that both of $B'_1$ and $B'_2$ (resp., $B''_1$ and $B''_2$) must be in the same region (interior or exterior) of $\alpha$.
If $B'_1, B'_2, B''_1$ and $B''_2$ are in the exterior of $\alpha$, then 
either $[A''_1,A''_2]_\alpha$ or $[A_1',A'_2]_\alpha$ is contained in the intersection of the digon-regions of $\gamma$ and $\delta$ (see Figure~\ref{fig:A'_1A'_2A''_1A''2}), which is impossible.
\begin{figure}[t]
	\centering
	\includegraphics[width= 10cm]{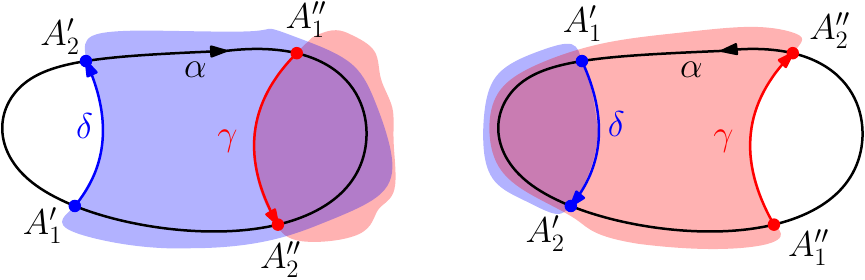}
	\caption{Suppose that $A'_1,A'_2,A''_1,A''_2$ appear in this cyclic order on $\alpha$ and $B'_1, B'_2, B''_1$ and $B''_2$ are in the exterior of $\alpha$. Then the intersection of the digon-region of $\delta$ (which is to its right) and the digon-region of $\gamma$ (which is to its left) contains either $[A''_1,A''_2]_\alpha$ or $[A'_1,A'_2]_\alpha$ which is impossible.}
	\label{fig:A'_1A'_2A''_1A''2}
\end{figure}
By a symmetric argument, it is impossible that $B'_1, B'_2, B''_1$, and $B''_2$ are in the interior of $\alpha$.
Therefore, either $B'_1$ and $B'_2$ are in the digon-region of $\alpha$ or $B''_1$ and $B''_2$ are there. However, this implies that the subarc of $\beta$ in the digon-region of $\alpha$ intersects either $\gamma$ or $\delta$ which is impossible since $\alpha$ and $\beta$ form a digon.

Thus $A'_1,A'_2,A''_2,A''_1$ appear in this cyclic order on $\alpha$, and similarly, $B'_1,B'_2,B''_2,B''_1$ appear in this cyclic order on $\beta$ if it is oriented such that $B'_2$ immediately follows $B'_1$.
Next, we redraw $\alpha$ close to the digon it forms with $\beta$ such that these two pseudocircles become disjoint (see Figure \ref{fig:arrangement-of-4}).
In a similar way, we redraw $\delta$ such that $\delta$ and $\gamma$ become disjoint.
Note that no other intersection points but $\alpha \cap \beta$ and $\gamma \cap \delta$ are destroyed and no new intersection points are introduced.
Therefore, the vertex set of $\A(\{\alpha,\beta,\gamma,\delta\})$ is precisely  $\{A'_1,A'_2,A''_1,A''_2,B'_1,B'_2,B''_1,B''_2\}$ and its edge set consists of the edges
$[A'_1,A'_2]_\alpha$, $[A'_2,A''_2]_\alpha$, $[A''_2,A''_1]_\alpha$, $[A''_1,A'_1]_\alpha$, 
$[B'_1,B'_2]_\beta$, $[B'_2,B''_2]_\beta$, $[B''_2,B''_1]_\beta$, $[B''_1,B'_1]_\beta$, 
$[A''_1,A''_2]_\gamma$, $[A''_2,B''_1]_\gamma$, $[B''_1,B''_2]_\gamma$, $[B''_2,A''_1]_\gamma$, 
$[A'_1,A'_2]_\delta$, $[A'_2,B'_1]_\delta$, $[B'_1,B'_2]_\delta$ and $[B'_2,A'_1]_\delta$.
It is not hard to see that this implies that the face set of $\A(\{\alpha,\beta,\gamma,\delta\})$ consists of four digons whose boundaries are $\{[A'_1,A'_2]_\alpha,[A'_1,A'_2]_\delta\}$, $\{[A''_1,A''_2]_\alpha,[A''_1,A''_2]_\gamma\}$, $\{[B'_1,B'_2]_\beta,[B'_1,B'_2]_\delta\}$
and $\{[B''_1,B''_2]_\beta,[B''_1,B''_2]_\gamma\}$, and six faces of size four whose boundaries are (refer to Figure~\ref{fig:arrangement-of-4} for an illustration): \\
$\{[A'_1,A'_2]_\delta,[A'_2,A''_2]_\alpha,[A''_2,A''_1]_\gamma,[A''_1,A'_1]_\alpha\}$,
$\{[A'_1,A'_2]_\alpha,[A'_2,B'_1]_\delta,[B'_1,B'_2]_\beta,[B'_2,A'_1]_\delta\}$,\\
$\{[B'_1,B'_2]_\delta,[B'_2,B''_2]_\beta,[B''_2,B''_1]_\gamma,[B''_1,B'_1]_\beta\}$,
$\{[A''_1,A''_2]_\alpha,[A''_2,B''_1]_\gamma,[B''_1,B''_2]_\beta,[B''_2,A''_1]_\delta\}$,\\
$\{[A'_1,A''_1]_\alpha,[A''_1,B''_2]_\gamma,[B''_2,B'_2]_\beta,[B'_2,A'_1]_\delta\}$ and
$\{[A'_2,A''_2]_\alpha,[A''_2,B''_1]_\gamma,[B''_1,B'_1]_\beta,[B'_1,A'_2]_\delta\}$.
\begin{figure}[t]
	\centering
	\includegraphics[width= 6cm]{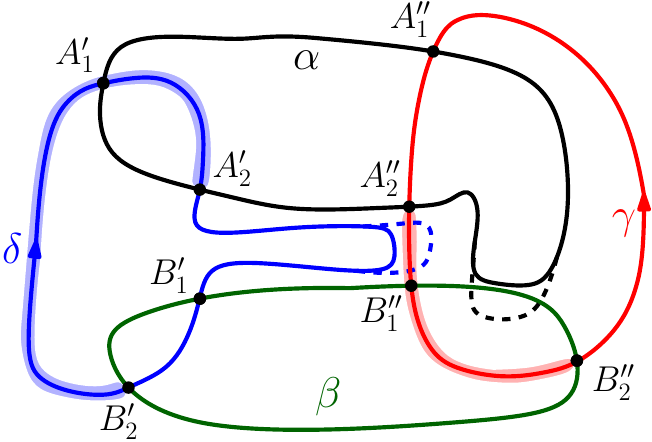}
	\caption{$\A(\{\alpha,\beta,\gamma,\delta\})$ after redrawing $\alpha$ and $\delta$ such that they do not form digons with $\beta$ and $\gamma$, respectively. If $d(\gamma^{\rm out},(\alpha^{\rm out},\beta^{\rm in}))=2$ and $d(\delta^{\rm in},(\alpha^{\rm out},\beta^{\rm in}))=1$, then $\gamma^{\rm out} \in (A''_2,B''_2)_\gamma$ and $\delta^{\rm in} \in (B'_2,A'_2)_\delta$.} 
	\label{fig:arrangement-of-4}
\end{figure}

Note that $d(\gamma^{\rm out},(\alpha^{\rm out},\beta^{\rm in}))=1$ and $d(\delta^{\rm in},(\alpha^{\rm out},\beta^{\rm in}))=2$ iff $\gamma^{\rm out} \in (B''_2,A''_2)_\gamma$ and $\delta^{\rm in} \in (A'_2,B'_2)_\delta$.
Similarly, $d(\gamma^{\rm out},(\alpha^{\rm out},\beta^{\rm in}))=2$ and $d(\delta^{\rm in},(\alpha^{\rm out},\beta^{\rm in}))=1$ iff $\gamma^{\rm out} \in (A''_2,B''_2)_\gamma$ and $\delta^{\rm in} \in (B'_2,A'_2)_\delta$.
Suppose for contradiction that one of these cases holds.
Then, on the one hand, there is a curve, namely $[\gamma^{\rm out},\delta^{\rm in}]_c$, which connects $\gamma^{\rm out}$ and $\delta^{\rm in}$ while not crossing any of $\alpha$, $\beta$, $\gamma$ and $\delta$ (this follows from the given cyclic order and Proposition~\ref{prop:cyclic-order}).
However, on the other hand, there is no face of $\A(\{\alpha,\beta,\gamma,\delta\})$ whose boundary contains segments of both $(B''_2,A''_2)_\gamma$ and $(A'_2,B'_2)_\delta$
or segments of both $(A''_2,B''_2)_\gamma$ and $(B'_2,A'_2)_\delta$, contradicting the existence of a curve as above.
\end{proof}

This completes the proof of Theorem~\ref{thm:main}.

\section{Concluding remarks}\label{sec:remarks}

What can we say about drawings on other surfaces? For example, we can draw $K_7$ on the torus, which implies the existence of 7 curves such that they pairwise form a digon, giving us 21 digons in total.  This suggests that we need a significantly different approach since an argument based on the embeddability of the double cover would give roughly $2n$ as an upper bound.  

\begin{problem}
    What is the maximum number of digons in a pairwise intersecting family of pseudocircles on a closed
orientable connected surface $M_g$ of genus g?
\end{problem}

We note that graphs with planar bipartite double covers were extensively studied in topological graph theory, see for example the works of Negami \cite{NE86,NE92}. Negami also considered general covers of graphs and formulated the following nice conjecture:

\begin{conjecture}[Negami 1988]
    A graph $G$ has a finite planar cover if and only if $G$ embeds in the projective plane.
\end{conjecture}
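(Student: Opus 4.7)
The plan begins with the easy direction, which is essentially formal. If $G$ embeds in the projective plane $\mathbb{RP}^2$, then lifting the embedding along the orientable double cover $S^2 \to \mathbb{RP}^2$ produces an embedding of a degree-$2$ cover of $G$ into $S^2$, and stereographic projection yields a planar embedding of that cover. So the entire content of the conjecture is the converse: every graph admitting a finite planar cover must embed in $\mathbb{RP}^2$.

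For the converse I would exploit minor-closedness. Projective-planarity is minor-closed with a known finite list of forbidden minors (determined by Archdeacon and by Glover--Huneke--Wang), and the property of admitting a finite planar cover is also minor-closed: given a finite planar cover $p : \tilde G \to G$, an edge deletion or contraction in $G$ can be lifted fiberwise to $\tilde G$, yielding a finite planar cover of the resulting minor. Thus it would suffice to exhibit, for each forbidden minor $H$ of projective-planarity, an obstruction preventing $H$ from possessing any finite planar cover.

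The main obstacle, and the reason the conjecture has remained open since 1988, is that classical work of Fellows, Negami, Archdeacon, and Hlin\v{e}n\'y has already dispatched all of the forbidden minors but one, namely $K_{1,2,2,2}$. Proving Negami's conjecture is therefore equivalent to showing that $K_{1,2,2,2}$ admits no finite planar cover, and decades of attempts have failed to close even this single case. Partial results are available for regular covers, for covers of small degree, and for covers with prescribed deck group, but no elementary approach has succeeded in ruling out all finite planar covers simultaneously.

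My tentative attack would be algebraic-topological: translate the existence of a finite planar cover $\tilde{G} \to K_{1,2,2,2}$ of degree $d$ into the existence of a finite-index subgroup of $\pi_1$ of an associated $2$-complex carrying prescribed combinatorial data, and then derive a contradiction by tracking how local rotations at the apex of degree $6$ must transform under deck transformations in any planar embedding of $\tilde G$. A complementary combinatorial line would try to adapt the Strong Hanani--Tutte Theorem, in the spirit of the double-cover argument of the present paper, to a multi-sheeted setting: one would seek a parity obstruction that holds across all sheets of the hypothetical cover at once, ruling out an even intersection pattern forced by planarity. Honesty compels me to admit that the actual contradiction step is exactly where a genuinely new idea is required; this is the well-known bottleneck on which every prior attempt has stalled, and I would not expect a short path through it.
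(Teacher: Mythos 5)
You should first note that the statement you were asked to prove is labeled a \emph{conjecture} in the paper, and the paper offers no proof of it: Negami's 1988 conjecture is a famous open problem, cited here only as context for the remark that digon graphs embed in the projective plane. So there is no ``paper's own proof'' to match, and any complete proof you produced would be a major new result rather than a reconstruction.

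Your write-up is accurate as a survey but is not a proof, and you say so yourself. The easy direction is fine: lifting an embedding in $\mathbb{RP}^2$ through the orientable double cover $S^2 \to \mathbb{RP}^2$ does give a planar double cover of $G$. The converse reduction is also correctly stated: having a finite planar cover is minor-closed, projective-planarity has a finite forbidden-minor characterization (Archdeacon; Glover--Huneke--Wang), and the work of Fellows, Negami, Archdeacon, and Hlin\v{e}n\'y reduces the whole conjecture to showing that $K_{1,2,2,2}$ has no finite planar cover. But that last step is precisely where your argument stops: the ``algebraic-topological'' and ``Hanani--Tutte across sheets'' ideas are sketched at the level of intent, with no candidate invariant, no parity statement, and no mechanism for deriving a contradiction. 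In particular, note that the Hanani--Tutte-style double-cover argument in the present paper works because the relevant cover is a specific, canonically drawn degree-$2$ cover; a hypothetical planar cover of $K_{1,2,2,2}$ can have arbitrary finite degree and arbitrary monodromy, and no known parity obstruction survives that generality. So the gap is genuine and concrete: the case $K_{1,2,2,2}$ remains open, and your proposal (correctly) does not close it.
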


Indeed, with some extra work, one can show that digon graphs are embeddable in the projective plane, but we omit the proof. 

Another natural question is to consider the number of triangles in intersecting pseudocircle arrangements. Felsner and Scheucher \cite{FELSNER21} showed that the maximal number of triangles is $2n^2/3+ O(n)$, and this is asymptotically tight. On the other hand, triangles are unavoidable, they showed that there are at least $2n/3$ triangles and made the following conjecture.

\begin{conjecture}
Every intersecting arrangement of $n\ge 3$ pseudocircles has at least $n- 1$ triangles.    
\end{conjecture}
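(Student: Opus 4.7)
The plan is to combine the sharp digon bound from Theorem~\ref{thm:main} with an Euler-formula face count, and then close the remaining gap between the current $2n/3$ lower bound and the conjectured $n-1$ by induction on $n$. First I would record the standard relations for a simple pairwise crossing arrangement, namely $V=n(n-1)$, $E=2n(n-1)$, and $F=n(n-1)+2$. Writing $f_k$ for the number of faces bounded by exactly $k$ edges and using $\sum f_k = F$ together with $\sum k f_k = 2E$, one derives the identity $\sum_{k\ge 4}(k-3)f_k - f_2 = n^2-n-6$. Inserting the new bound $f_2\le 2n-2$ controls the total ``defect'' contributed by large faces and so limits how few triangles can coexist with few large faces. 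By itself this inequality is too coarse to yield $f_3\ge n-1$, so it must be paired with a structural step.

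Second, I would attempt induction on $n$. The base case $n=3$ reduces to a finite case analysis of simple pairwise intersecting arrangements of three pseudocircles, each of which is readily seen to contain at least two triangles. For the inductive step, the aim is to exhibit a pseudocircle $\alpha\in\F$ whose removal decreases the number of triangles by at most one. A natural candidate is a pseudocircle of low ``triangle-degree''; since the sum of triangle-degrees equals $3f_3$, averaging would produce, in any arrangement where $f_3$ is suspiciously small, a pseudocircle $\alpha$ lying on only a few triangles. When $\alpha$ is removed, each pair of faces separated by an arc of $\alpha$ merges. The delicate part is to show that each triangle of $\A(\F)$ destroyed by such a merge is either recovered by a newly-created triangle in $\A(\F\setminus\{\alpha\})$ or was already accounted for in an adjacent face.

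A promising alternative, in the spirit of Section~\ref{sec:mainproof}, is to construct an auxiliary graph or $3$-uniform hypergraph whose hyperedges record triangles of $\A(\F)$, and to draw it using the transversal curve $c$ provided by Lemma~\ref{lem:base}. The hope would be to draw this object on a surface of small Euler genus with controlled crossings between independent (hyper)edges, and then to invoke either Euler's formula or a higher-dimensional analog of the Strong Hanani--Tutte Theorem (Theorem~\ref{thm:strongHanani}) to bound the number of hyperedges, thereby forcing a lower bound via the ``doubling'' device employed here for digons.

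The main obstacle, in my view, is that triangles are not governed by a clean two-region dichotomy as digons are: Proposition~\ref{observation:int_ext} has no triangle analog, since a pseudocircle may easily support triangles on both of its sides. Consequently, there is no well-defined ``triangle-region'' of a pseudocircle that would play the role of the digon-region in the drawing rule of Section~\ref{sec:mainproof}, and the parity invariant that drives the Hanani--Tutte argument breaks down. Circumventing this will likely require either a new structural lemma partitioning triangles by some orientation-type attribute inherited from the three supporting pseudocircles, or an averaging argument over choices of the auxiliary curve $c$ that smooths out the worst local configurations.
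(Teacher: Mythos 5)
The statement you are addressing is not a result of this paper at all: it is an open conjecture (attributed in the concluding remarks to Felsner and Scheucher), and the paper gives no proof of it --- the best lower bound it cites is $2n/3$ triangles. Your text is, by its own admission, a research plan rather than a proof, and none of its three routes is carried to completion. Concretely: (1) the Euler-formula identity $\sum_{k\ge 4}(k-3)f_k - f_2 = n^2-n-6$ is correct but contains no information about $f_3$, since the coefficient of $f_3$ is $k-3=0$; inserting $f_2\le 2n-2$ from Theorem~\ref{thm:main} therefore cannot produce any lower bound on the number of triangles, no matter how it is ``paired with a structural step'' that is never supplied. (2) The induction is set up backwards. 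To deduce $f_3(\F)\ge n-1$ from the inductive hypothesis $f_3(\F\setminus\{\alpha\})\ge n-2$ you would need the removal of $\alpha$ to decrease the triangle count by \emph{at least} one (equivalently, that inserting a pseudocircle always creates a net new triangle); you instead ask for a pseudocircle whose removal decreases the count by \emph{at most} one, which yields only an upper bound on $f_3(\F)$ and is useless here. Moreover, no monotonicity of this kind is established or known: deleting a pseudocircle merges faces and can both destroy and create triangles, and the averaging argument via triangle-degrees only helps when $f_3$ is small, which is precisely the regime in which you have no control over what deletion does.

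(3) The Hanani--Tutte route is abandoned in your final paragraph for the correct reason --- there is no triangle analog of Proposition~\ref{observation:int_ext}, hence no ``triangle-region'' and no drawing rule with the even-crossing parity invariant --- but diagnosing the obstruction is not the same as overcoming it, and in any case Hanani--Tutte-type arguments give \emph{upper} bounds on edge counts of planar drawings, whereas the conjecture asks for a \emph{lower} bound on triangles, so the analogy with Section~\ref{sec:mainproof} points in the wrong direction from the start. The statement remains open, and your proposal does not constitute a proof of it.
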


\nocite{*} 
\bibliographystyle{plain}
\bibliography{bibliography}

\end{document}